\definecolor {refcol}{RGB}{40,0,255}
\newfont{\footsc}{cmcsc10 at 8truept}
\newfont{\footbf}{cmbx10 at 8truept}
\newfont{\footrm}{cmr10 at 10truept}
\newtheorem{theorem}{Theorem}
\newtheorem{corollary}{Corollary}
\newtheorem{definition}{Definition}
\newtheorem{example}{Example}
\newtheorem{lemma}{Lemma}
\newtheorem{proposition}[theorem]{Proposition}
\newtheorem{remark}{Remark}
\begin{document}

\title{Distribution of deformed Laplacian  limit points}

\author{Elismar R. Oliveira$^1$,~~\ Jonas Szutkoski $^2$, ~~\ Vilmar Trevisan$^1$\\
\small{$^{1}${\it Instituto de Matem\'atica e Estat\'{\i}stica, Universidade Federal do Rio Grande do Sul,}} \\
\small{Porto Alegre, Brasil} \\
\small{\texttt{elismar.oliveira@ufrgs.br,~~trevisan@mat.ufrgs.br}}\\
\small{$^{2}${\it DECESA-UFCSPA,}} \\
\small{\noindent  \texttt{szutkoski@ufcspa.edu.br}}
}
\date{}
\maketitle
\begin{abstract}
This paper investigates  limit points of the deformed Laplacian matrix, which merges the Laplacian and signless Laplacian matrices of a graph through a quadractic one-parameter family of matrices.   First, we show that any value greater or equal to 1 is a deformed Laplacian limit point (for different values of the parameter $s$)  using a simple family of trees. Second, we define  $(T_k)_{k \in \mathbb{N}}$ the Shearer's sequence of caterpillars for $\lambda>1$ and we present a convergence criterion based on Shearer's approach. Our main result is that for any fixed value $\lambda_0>1$ there exists a unique value $0<s^* <\sqrt{\lambda_0} -1$ such that,  and for any $s \in (0,s^*)$ the interval $[\lambda_0, \; +\infty)$ is entirely formed by $s$-deformed Laplacian limit points (for the same value of $s$). Finally, we provide some numerical data exploring the limit properties.
\end{abstract}

\noindent  \textbf{MSC 2010}: 05C50, 05C76, 05C35, 15A18.\\
\noindent   \textbf{Keywords}: Limit points, deformed Laplacian matrix, eigenvalues.

\section{Introduction}

For a given graph $G$ with $n$ vertices, adjacency matrix $A$ and  diagonal degree matrix $D$, the {\it deformed Laplacian matrix} of $G$ is defined as
\begin{equation}\label{eq:MG}
  M_G(s) =  I - sA + s^2(D - I),
\end{equation}
where $I$ is the $n \times n$ identity matrix and $s \in \mathbb{R}$.  $M_G(s) $ was defined and studied in a distinct context (see, for example, \cite{Mor2013,GDV2018}). Our point of view is to see and understand this parametrized family  of matrices as an interpolation between classical graph matrices.  As $M_G(-1) = A +D$ is the signless Laplacian matrix and $M_G(1) = D-A$ is the combinatorial Laplacian matrix, the deformed Laplacian matrix may provide a smooth transition between these two well studied graph matrices.

We recall that the ground breaking work of V. Nikiforov~\cite{nikiforov2017Aalpha}, who introduced the $A_\alpha$ matrix of a graph, defined as $A_G(\alpha) := \alpha D(G) + (1 - \alpha) A(G)$, where $0 \leq \alpha \leq 1$ has been source of study for understanding the transition between the adjacency matrix $A(G)$ and the signless Laplacian $Q(G) = D(G) + A(G)$, and has opened new directions in spectral graph theory.
Our main motivation is that the deformed Laplacian matrix offers a promising avenue for generalizing and unifying spectral results associated for both matrices. Moreover, the structure of the deformed Laplacian reveals new algebraic features that may have applications not only in theoretical studies, but also in practical areas such as network robustness, centrality measures, and graph-based machine learning.

This paper may be seen as a sequel of the publication \cite{DOT2025} co-authored by two of the present authors, in which we explore spectral properties of this matrix. Our main goal here is to study the distribution of {\it limit points of the deformed Laplacian matrix}, which is part of a project called {\it Hoffman Program} \cite{wang2020hoffman}. In order to explain our results, we first discuss briefly the context.

In 1972 \cite{hoffman1972limit}, Hoffman asked which real numbers are limit points for the spectral radius of matrices having non negative integral entries. As he proved that it is only necessary to consider adjacency matrices of graphs, we say that Hoffman introduced the concept of limit points of eigenvalues of graphs. Hoffman’s pioneering results gave rise to a substantial body of literature, extending the study of limit points to other graph-related matrices and to eigenvalues beyond the spectral radius. For an overview, we refer to the survey \cite{wang2020hoffman} , as well as the more recent works \cite{WangLiuBel2020}, \cite{lineartrees-24} and \cite{Bar25}.

In order to generalize the concept of limit points of graph eigenvalues, let $M$ be a matrix associated to a graph $G$ such that the $M$-eigenvalues are all real and the $M$-spectral radius is equal to the largest $M$-eigenvalue, denoted by $\rho_M(G)$. A real number $\lambda(M)$ is an $M$-limit point of the $M$-spectral radius of graphs if there exists a sequence of graphs $(G_k)_{k \in \mathbb{N}}$ such that
$$\rho_M(G_i) \not= \rho_M(G_j) \mbox{ whenever } i \not= j \mbox{ and }
\lim_{k \to \infty }\rho_M(G_k) =\lambda(M).$$

The Hoffman program consists of two subproblems: i) determine all the possible values of $M$-limit point (if there are any) and; ii) Find all the connected graphs whose $M$-spectral radius does not exceed a fixed $M$-limit point.

In this paper, we discuss subproblem i) for the deformed Laplacian matrix. More precisely, we aim to characterize which real numbers $\lambda$ are $M$-limit points, for $M$ the deformed Laplacian matrix.

\begin{definition}\label{def:limit point}
	A number $\lambda \in \mathbb{R}$ is an $s$-deformed Laplacian  limit point if, for a fixed value $s \in \mathbb{R}$, there exists a sequence of graphs $(G_n)_{n \in \mathbb{N}}$ such that $\rho(M_{G_i}(s)) \neq \rho(M_{G_j}(s)),\; i\neq j$ and $\displaystyle\lim_{n \to \infty} \rho(M_{G_n}(s)) = \lambda$. The set of all $s$-deformed Laplacian  limit points is denoted by $\mathcal{L}(s)$.
\end{definition}

In this paper, we study and characterize which $\lambda >1$ are $s$-deformed limit points. Since the matrix $M_G(s)$ depends on the real parameter value of $s$, our results have two distinct directions. We first show that there is a sequence of graphs  such that any $\lambda >1$ is an $s$-deformed Laplacian  limit point for the same sequence, but for different values of $s$. For our second type of results, we fix values of $\lambda>1$  and, by constructing a sequence of graphs,  we find intervals of $s$-deformed Laplacian limit points for some values of $s \in \mathbb{R}$. To be more precise, we now describe our main results.

\subsection{Main results}

We introduce  one of the simplest types of graphs, other than a path, having a single vertex of degree 3.
\begin{definition}\label{def:T1nn}
	We define, for each $n \geq 1$, the family of trees $T_{1,n,n}$ formed by a vertex $u$ where we attach a path $P_1$ and two paths $P_n$ (see Figure~\ref{fig:Starlike_t1nn}).
\end{definition}
\begin{figure}[H]
	\centering
	\includegraphics[scale=1.2]{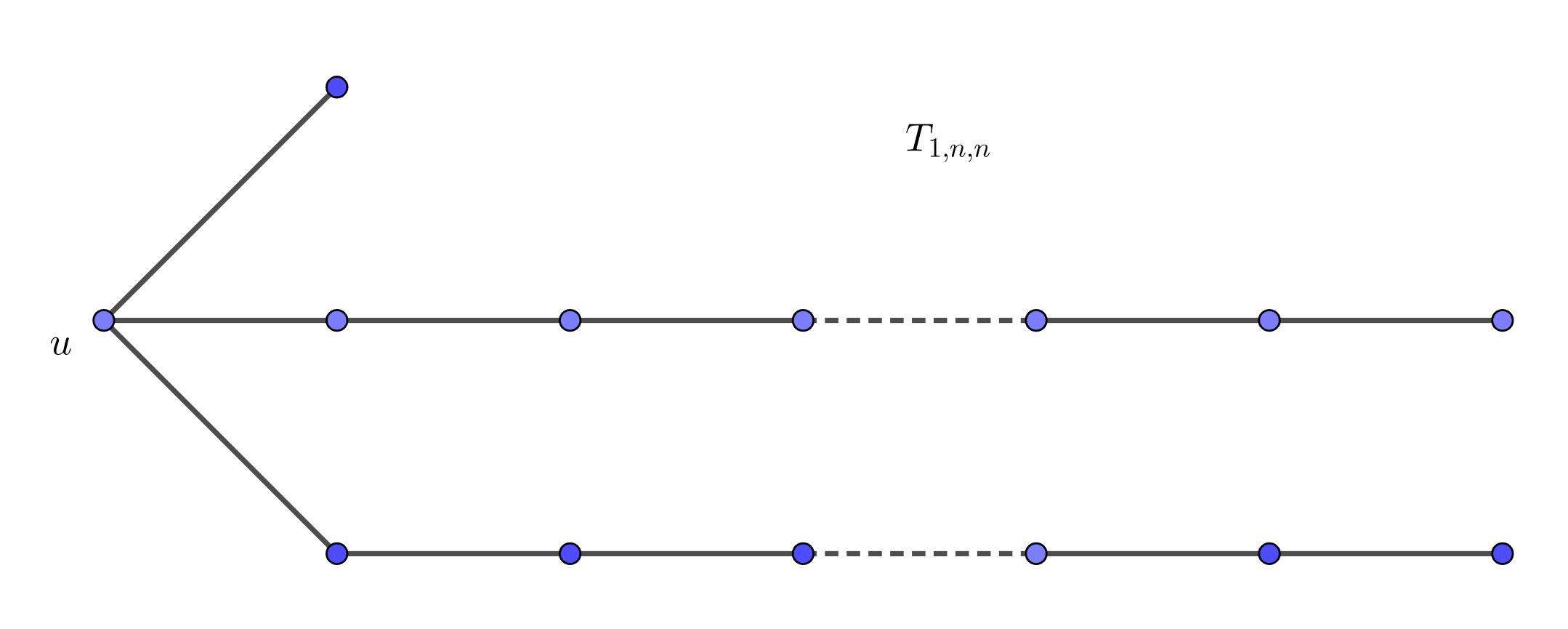}
	\caption{Starlike tree $T_{1,n,n}$.}
	\label{fig:Starlike_t1nn}
\end{figure}

In Section~\ref{sec: simple family} we prove Theorem~\ref{thm:all1} showing that any $\lambda >1$ is an $s$-deformed Laplacian limit point.

\begin{theorem}\label{thm:all1} Let $1 < \lambda \in \mathbb{R}$ be a real number. There exists an $s>0$ such that $\lambda$ is an $s$-deformed limit point. Precisely, $\lambda = \lim_{n \to \infty} \rho(M_{T_{1,n,n}}(s))$, for some $s\in \mathbb{R}$.
\end{theorem}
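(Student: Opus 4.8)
The plan is to exploit the reflective symmetry of $T_{1,n,n}$ that swaps the two arms $P_n$, together with the fact that $M_{T_{1,n,n}}(s)$ is tridiagonal along each arm, to reduce the eigenvalue problem to a scalar linear recurrence. Writing the central vertex as $u$, the pendant as $w$, and the arm vertices as $a_1,\dots,a_n$, the arm-swap is an automorphism, so $M_{T_{1,n,n}}(s)$ decomposes into a symmetric and an antisymmetric block. The antisymmetric block is a single Dirichlet arm, whose eigenvalues all lie in the interval $[(1-s)^2,(1+s)^2]$ forming the essential spectrum of the half-infinite path; hence any eigenvalue exceeding $(1+s)^2$ must come from the symmetric block. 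On the interior of an arm an eigenvector component satisfies $x_{i+1}=\frac{1+s^2-\mu}{s}\,x_i-x_{i-1}$, whose characteristic roots $t,1/t$ obey $t+1/t=\frac{1+s^2-\mu}{s}$; for $\mu>(1+s)^2$ these are real with exactly one root $t\in(-1,0)$.

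First I would pass to the limit $n\to\infty$. For $\mu$ above the band edge the only admissible arm solution is the decaying one $x_{a_i}=t^{\,i}x_u$, and substituting this together with the pendant relation $x_w=\frac{s}{1-\mu}x_u$ into the equation at $u$ yields a secular equation $g_s(\mu)=0$ in $\mu$, $t$ and $s$; eliminating $t$ via $1-\mu=s(t+1/t-s)$ gives a closed equation in $\mu$ and $s$ alone. I would then show, for each fixed $s>0$, that $g_s$ is positive at $\mu=(1+s)^2$ and tends to $-\infty$ as $\mu\to\infty$, so by the intermediate value theorem there is a bound state $\mu^*(s)\in\big((1+s)^2,\infty\big)$, unique by monotonicity. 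A standard truncation and interlacing argument then identifies $\lim_{n\to\infty}\rho(M_{T_{1,n,n}}(s))=\mu^*(s)$: the finite-arm eigenvalue converges to the decaying-mode value, and since it lies above the whole essential spectrum it is the spectral radius for all large $n$.

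It remains to prove that $s\mapsto\mu^*(s)$ is a continuous surjection onto $(1,\infty)$. Continuity and strict monotonicity follow from the implicit function theorem applied to $g_s(\mu)=0$. For the endpoint behaviour I would squeeze $\mu^*(s)$ between $(1+s)^2$ and the Gershgorin bound $1+3s+2s^2$ (using $\Delta=3$), both of which tend to $1$ as $s\to0^+$, giving $\mu^*(s)\to1^+$; since $\mu^*(s)>(1+s)^2\to\infty$ as $s\to\infty$, the range of $\mu^*$ covers all of $(1,\infty)$. Hence for any prescribed $\lambda>1$ there is an $s>0$ with $\mu^*(s)=\lambda$, the asserted value. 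Finally, to meet Definition~\ref{def:limit point} I would check that the radii are pairwise distinct: deleting the two far endpoints sends $T_{1,n+1,n+1}$ to $T_{1,n,n}$, and Cauchy interlacing together with the fact that the bound-state eigenvector is a nonvanishing decaying exponential yields strict monotonicity of $\rho(M_{T_{1,n,n}}(s))$ in $n$.

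The main obstacle is the careful justification of the limit in the second step, namely that the largest eigenvalue really converges to the symmetric-sector bound state $\mu^*(s)$ rather than accumulating at the band edge $(1+s)^2$, and that it does so strictly monotonically. Because $M_G(s)$ has negative off-diagonal entries for $s>0$, Perron--Frobenius positivity is unavailable, so the clean separation of the symmetric and antisymmetric sectors and the non-vanishing of the bound-state eigenvector must carry the argument in its place; everything else reduces to a one-variable intermediate-value and monotonicity analysis of $g_s$.
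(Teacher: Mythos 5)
Your proposal is correct in substance but takes a genuinely different route from the paper. The paper first proves the limit exists by pure monotone convergence (Lemma~\ref{lem:limT1nn}): $\rho(M_{T_{1,n,n}}(s))$ is strictly increasing in $n$ by Proposition~\ref{prop:properties for trees}(5) and bounded above by Lemma~\ref{lem:bound}; it then \emph{identifies} the limit by running the Diagonalize algorithm along an arm, which produces the Riccati-type recurrence $Z_j=1+s^2-\lambda-s^2/Z_{j-1}$ (Theorem~\ref{thm: dyn neg neg Delta posit}), and passes to the limit in the root equation to obtain a secular equation with a unique root $\tau_0(s)\in(1,\infty)$, reducible to the quartic \eqref{eq: solve tau0}. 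You instead use the arm-swap symmetry to split off an antisymmetric block confined below $(1+s)^2$, and analyze the symmetric block by the linear transfer-matrix/bound-state picture for the half-line operator. The two are closely related at the core: your decaying root satisfies $st=\theta'(s)=s^2/\theta(s)$, so your secular equation $g_s(\mu)=0$ is exactly the paper's equation for $\tau_0(s)$. What your route buys is a rigorous surjectivity argument: continuity of $\mu^*(s)$ plus the squeeze $(1+s)^2\le\mu^*(s)\le 1+3s+2s^2$ as $s\to 0^+$ and $\mu^*(s)>(1+s)^2\to\infty$ give, via the intermediate value theorem, that every $\lambda>1$ is attained — a point the paper supports mainly by a numerical table and the uniqueness of the root of $h$. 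What the paper's route buys is that existence of the limit is free (monotone and bounded), so no truncation/spectral-approximation step is needed; your version must prove that the finite radii converge to the bound state $\mu^*(s)$ — the step you yourself flag — via a compression/test-vector argument. (Also, your claim that strict monotonicity of $s\mapsto\mu^*(s)$ follows from the implicit function theorem is an overstatement; IFT gives smoothness, not monotonicity, but continuity plus the endpoint limits is all the IVT needs.)

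Two steps need patching, both fixable. First, your closing interlacing claim is off as stated: $M_{T_{1,n,n}}(s)$ is \emph{not} a principal submatrix of $M_{T_{1,n+1,n+1}}(s)$, because deleting the two far endpoints lowers the degrees of their neighbours and hence changes two diagonal entries from $1+s^2$ to $1$. The correct statement is the positive-semidefinite comparison $M_{T_{1,n,n}}(s)\preceq$ (principal submatrix obtained by deleting the two rows/columns), after which eigenvalue monotonicity applies; strictness then requires the non-vanishing of the Perron-type eigenvector. This is precisely what Proposition~\ref{prop:properties for trees}(5) packages, and the same diagonal discrepancy must be handled (in the favourable direction) in your truncation step and in your claim that the antisymmetric block lies inside the band $[(1-s)^2,(1+s)^2]$ — only the upper bound $<(1+s)^2$ is true and only it is needed. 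Second, your remark that Perron--Frobenius positivity is unavailable for $s>0$ is not quite right: since trees are bipartite, a signature similarity gives $\sigma(M_G(s))=\sigma(M_G(-s))$, and $M_G(-s)$ is entrywise nonnegative and irreducible; this is how the paper (Section~\ref{sec:PF}) justifies that the spectral radius equals the largest eigenvalue, a fact your argument implicitly uses when it equates $\rho(M_{T_{1,n,n}}(s))$ with the top symmetric-sector eigenvalue.
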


A much harder problem is, for fixed $\lambda >1$ and $s \in \mathbb{R}$, to construct a sequence of graphs $(G_k)_{k \in \mathbb{N}}$ such that $\rho(M_{G_k}(s)) \to \lambda$. By adapting Shearer's sequence of caterpillars (see Section~\ref{sec: Deformed Laplacian limit points for caterpillars} for definitions) we achieve this. In fact we prove a stronger result. For a fixed $\lambda >1$, we find a unique $s^*(\lambda)<1$ such that for any $s \in (0,s^*)$, the whole interval $[\lambda,+\infty)$ is composed by $s$-deformed limit points. The precise result is stated in Theorem~\ref{thm:suf criterion} and is proven in Section~\ref{sec: Approximation}.

\bigskip

\noindent The structure of the paper is organized as follows.

In Section~\ref{sec:preliminaries}, we review the key definitions and notational conventions. We focus on the spectral radius of deformed Laplacian matrix of graphs, we present the main tool we use in this manuscript and derive preliminary results, preparing for the main achievements in the subsequent sections.

In Section~\ref{sec: simple family} we use the previous results and prove Theorem~\ref{thm:all1}. In section~\ref{sec: Deformed Laplacian limit points for caterpillars} we present Shearer's approach, by defining a sequence of caterpillars for a given $\lambda>1$, and we establish our strategy of approximation through Shearer's sequences  for  $0<s<\sqrt{\lambda}-1$. In Section~\ref{sec: Approximation} we present our main results.  Theorem~\ref{thm: main criterion}  gives a criterion for convergence while Theorem~\ref{thm:suf criterion}  establishes the conditions and intervals formed by $s$-deformed Laplacian limit points. Finally, in Section~\ref{sec: Remarks and numerical data} we present some remarks, open questions and  numerical computations.

\section{Preliminaries}\label{sec:preliminaries}
For a graph $G$ and a real scalar $s$, we consider the deformed Laplacian matrix $M_G(s)$, given in Equation~\ref{eq:MG}
\begin{align}\label{MG2}
M_G(s)&= \left[
    \begin{array}{ccccc}
      1+ s^2 (d_{v_1}-1) & -s a_{1 \, 2} & \cdots & -s a_{1\, n-1} & -s a_{1 \, n} \\
      -s a_{1 \, 2} & 1 + s^2 (d_{v_2}-1) & \cdots & -s a_{2\, n-1} & -s a_{2 \,n} \\
      \vdots & \vdots & \ddots & \vdots & \vdots \\
      -s a_{n-1\,1} & -s a_{n-1\,2} & \cdots & 1+ s^2 (d_{v_{n-1}}-1) & -s a_{n-1\,n} \\
      -s a_{n \,1} & -s a_{n\,2} & \cdots & -s a_{n\,n-1} & 1+ s^2 (d_{v_n}-1) \\
    \end{array}
  \right].
\end{align}

Since $M_G(s)$  is real and symmetric, its eigenvalues are real. The multiset of eigenvalues of $M_G(s)$ is called the deformed Laplacian spectrum of $G$, denoted by $Spect(M_G(s)):=\sigma(M_G(s))$.

As this paper is devoted to study which numbers are limit points for the spectral radius, we first discuss Perron-Frobenius Theory for $M_G(s)$.

\subsection{Perron-Frobenius for the deformed Laplacian matrix}\label{sec:PF}

The classical theory of Perron-Frobenius is applied for square nonnegative irreducible matrices. So, as long as the graph $G$ is connected and $s\leq 0$, Perron-Frobenius theory is applied to $M_G(s)$.

The graphs we consider in this paper are trees, therefore connected, so irreducibility follows. In \cite{DOT2025} it is proven that $\sigma(M_G(s))=\sigma(M_G(-s))$ whenever $G$ is bipartite. Since trees are bipartite, we may consider only $s\leq 0$, implying nonnegativity. In particular, for trees, the largest eigenvalue of $M_G(s)$ is the spectral radius of $M_G(s)$, is a simple eigenvalue and is denoted by $\rho(M_G(s))$.

Therefore, the subproblem i) of the Hoffman Program is well defined for the $s$-deformed Laplacian matrix when we deal with trees.

It is easy to see that  $M_G(0)=I$, $M_G(1)=I - A + (D-I)=D-A=L(G)$ is the combinatorial Laplacian matrix of $G$ and $M_G(-1)=I + A + (D-I)=D+A=Q(G)$ is the signless Laplacian matrix of $G$.  Clearly the intervals (-1,0), (0,1) and their complements play a central role with respect to spectral properties of $M(G)$. This motivates the following.

\begin{definition}\label{def:sub} For a given graph $G$, consider $M_G(s)$.
  We say that $s\in \mathbb{R}$ is sub-Laplacian if $|s|<1$. Analogously, $s$ is super-Laplacian if $|s|>1$.
\end{definition}

We also notice that the symmetric matrix $M_G(s)$ underlies the graph $G$, it is a weighted matrix of $G$.  In the sequel, we explain the main tool to obtain our results. It is an algorithm to locate eigenvalues of symmetric matrices, and has been revealed a powerful technique to explore and understand spectral properties of the deformed Laplacian matrix.

\subsection{Main tool}\label{sec:maintool}

Our main tool in this paper is the Diagonalize algorithm to locate eigenvalues from \cite{JT2011} to study  eigenvalues of symmetric matrices $G$ . The Diagonalize algorithm (see Figure~\ref{fig:diagonalize}) is given below.
\begin{figure}[H]
{ \tt 	{\footnotesize
\begin{tabbing}
aaa\=aaa\=aaa\=aaa\=aaa\=aaa\=aaa\=aaa\= \kill
     \> Input: matrix $M = (m_{ij})$ with underlying tree $T$\\
     \> Input: Bottom up ordering $v_1,\ldots,v_n$ of $V(T)$\\
     \> Input: real number $x$ \\
     \> Output: diagonal matrix $D = \mbox{diag}(d_1, \ldots, d_n)$ congruent to $M + xI$ \\
     \> \\
     \>  Algorithm $\mbox{Diagonalize}(M, x)$ \\
     \> \> initialize $d_i := m_{ii} + x$, for all $i$ \\
     \> \> {\bf for } $k = 1$ to $n$ \\
     \> \> \> {\bf if} $v_k$ is a leaf {\bf then} continue \\
     \> \> \> {\bf else if} $d_c \neq 0$ for all children $c$ of $v_k$ {\bf then} \\
     \> \> \>  \>   $d_k := d_k - \sum \frac{(m_{ck})^2}{d_c}$, summing over all children of $v_k$ \\
     \> \> \> {\bf else } \\
     \> \> \> \> select one child $v_j$ of $v_k$ for which $d_j = 0$  \\
     \> \> \> \> $d_k  := -\frac{(m_{jk})^2}{2}$ \\
     \> \> \> \> $d_j  :=  2$ \\
     \> \> \> \> if $v_k$ has a parent $v_\ell$, remove the edge $\{v_k,v_\ell\}$. \\
     \> \>  {\bf end loop} \\
\end{tabbing}}
}
\caption{Algorithm $\mbox{Diagonalize}(M, x)$.}\label{fig:diagonalize}
\end{figure}
The following result summarizes the way in which the algorithm will be applied. Its proof is based on a property of matrix congruence known as Sylvester's Law of Inertia, we refer to~\cite{JT2011},~\cite{BragaRodrigues2017} and ~\cite{HopJacTrevBook22} for details.
\begin{theorem}\label{thm:inertia}
Let $M$ be a symmetric matrix of order $n$ that corresponds to a weighted tree $T$ and let $x$ be a real number. Given a bottom-up ordering of $T$, let $D$ be the diagonal matrix produced by Algorithm Diagonalize with entries $T$ and $x$. Then the number of positive/negative/zero entries of $D$ is number of eigenvalues of $M$ greater than/smaller than/equal to $-x$.

\end{theorem}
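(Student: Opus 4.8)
The plan is to reduce the statement to Sylvester's Law of Inertia, which asserts that congruent real symmetric matrices share the same inertia, that is, the same number of positive, negative, and zero eigenvalues. Since the eigenvalues of $M+xI$ are exactly the numbers $\mu+x$ as $\mu$ ranges over the eigenvalues of $M$, such an eigenvalue $\mu+x$ is positive, negative, or zero precisely when $\mu>-x$, $\mu<-x$, or $\mu=-x$. Moreover, a diagonal matrix has as many positive (respectively negative, zero) eigenvalues as it has positive (respectively negative, zero) diagonal entries. Hence, once we establish that the output $D$ of the algorithm is congruent to $M+xI$, the count of positive/negative/zero entries of $D$ immediately yields the count of eigenvalues of $M$ that are greater than/smaller than/equal to $-x$, as claimed. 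So the entire burden of the proof is to verify that $D$ is congruent to $M+xI$.

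To prove this congruence I would argue by induction on the vertices in the prescribed bottom-up order, maintaining the invariant that after processing $v_k$ the current matrix remains congruent to $M+xI$ and that the rows and columns indexed by the already-processed vertices have been diagonalized. The leaf case is trivial, since no operation is performed. For an internal vertex $v_k$ all of whose children $c$ satisfy $d_c\neq 0$, the update $d_k:=d_k-\sum (m_{ck})^2/d_c$ is precisely the symmetric Schur complement obtained by the congruence $R_k\mapsto R_k-(m_{ck}/d_c)R_c$ applied simultaneously to the corresponding row and column for each child. Because each child has already been diagonalized, its processed row carries only the diagonal $d_c$ and the single off-diagonal coupling $m_{ck}$ to $v_k$; and because in a tree the children of $v_k$ are pairwise non-adjacent, these elementary congruences clear every off-diagonal entry between $v_k$ and its children without reintroducing fill-in elsewhere, leaving $d_k$ as the new pivot.

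The main obstacle, and the step requiring the most care, is the degenerate branch in which some child $v_j$ of $v_k$ has $d_j=0$, so that $v_j$ cannot serve as a pivot. After the previous eliminations the row of $v_j$ has a vanishing diagonal and a single off-diagonal entry $m_{jk}$, so the principal submatrix indexed by $\{v_j,v_k\}$ takes the form $\left(\begin{smallmatrix} 0 & m_{jk}\\ m_{jk} & d_k \end{smallmatrix}\right)$ with determinant $-m_{jk}^2<0$; by Sylvester's law this block contributes exactly one positive and one negative entry to the inertia, which is what the assignments $d_j:=2$ and $d_k:=-(m_{jk})^2/2$ record. The deletion of the edge $\{v_k,v_\ell\}$ to the parent is justified because the off-diagonal entry $m_{jk}$ can itself be used as a pivot: the congruence $R_\ell\mapsto R_\ell-(m_{k\ell}/m_{jk})R_j$ clears the coupling $m_{k\ell}$ between $v_k$ and $v_\ell$, and, since the diagonal of $v_j$ is zero and its only nonzero off-diagonal entry points to $v_k$, this operation leaves the diagonal of $v_\ell$ untouched and introduces no new fill-in. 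Thus $v_k$ is fully resolved and effectively detached from $v_\ell$, exactly as the edge removal prescribes; checking that this decoupling is faithful, that is, that the matrix attached to the pruned tree remains congruent to the appropriate submatrix of $M+xI$, is the delicate point of the induction. Completing the induction over all vertices then produces a diagonal matrix $D$ congruent to $M+xI$, and the inertia bookkeeping above finishes the proof. For the full verification of these congruences I would follow the treatment in \cite{JT2011} and \cite{HopJacTrevBook22}.
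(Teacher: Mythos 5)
Your proposal is correct and takes essentially the same route as the paper, which offers no proof of its own but simply invokes Sylvester's Law of Inertia and defers to \cite{JT2011}, \cite{BragaRodrigues2017}, and \cite{HopJacTrevBook22}; your congruence-by-elementary-row-and-column-operations argument, including the treatment of the zero-pivot $2\times 2$ block with determinant $-m_{jk}^2<0$, is precisely the standard argument from those references. One small point to tighten: in the degenerate branch, the zero-diagonal row of $v_j$ must be used to clear the couplings of $v_k$ to \emph{all} of its remaining neighbours (its other children as well as the parent $v_\ell$) before the principal block on $\{v_j,v_k\}$ can legitimately be read off as contributing one positive and one negative entry to the inertia; this is the same pivot operation you already describe for the parent edge, so no new idea is needed.
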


\begin{example}\label{ex:T1nn}
Consider the graph $T_{1,n,n}$ given in Figure \ref{fig:Starlike_t1nn}, choose $\lambda = \rho(M_{T_{1,n.n}}(s))$  the spectral radius of $T_{1,n,n}$ and apply the  Algorithm Diagonalize ($\mbox{Diag}(M_{T_{1,n,n}}(s), -\lambda)$).  In Figure~\ref{fig:diag_t1nn} we illustrate the application. On the left side, we have the initial values of vertices and edges. On the right side, we have the sequence of the outputs produced by the application of the algorithm, which we analyse now.
\begin{figure}[H]
	\centering
	\includegraphics[width=0.5\linewidth]{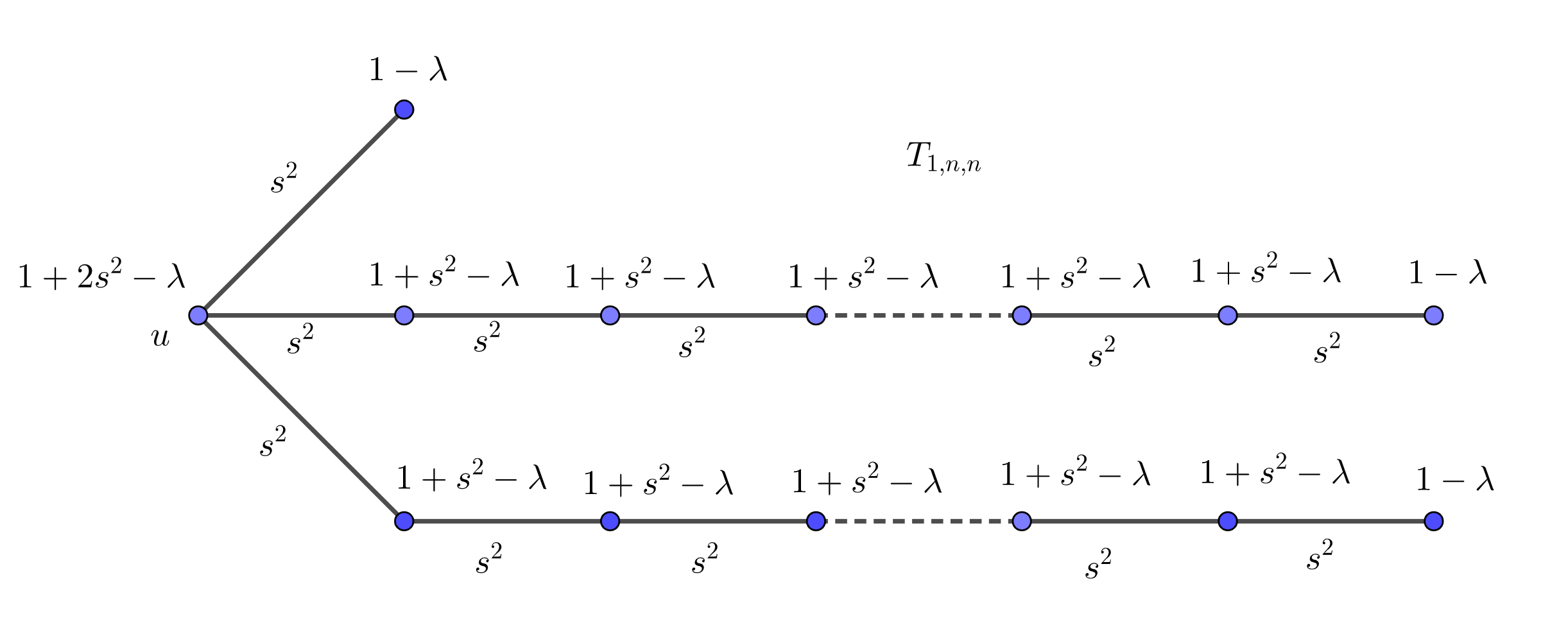}
	\includegraphics[width=0.45\linewidth]{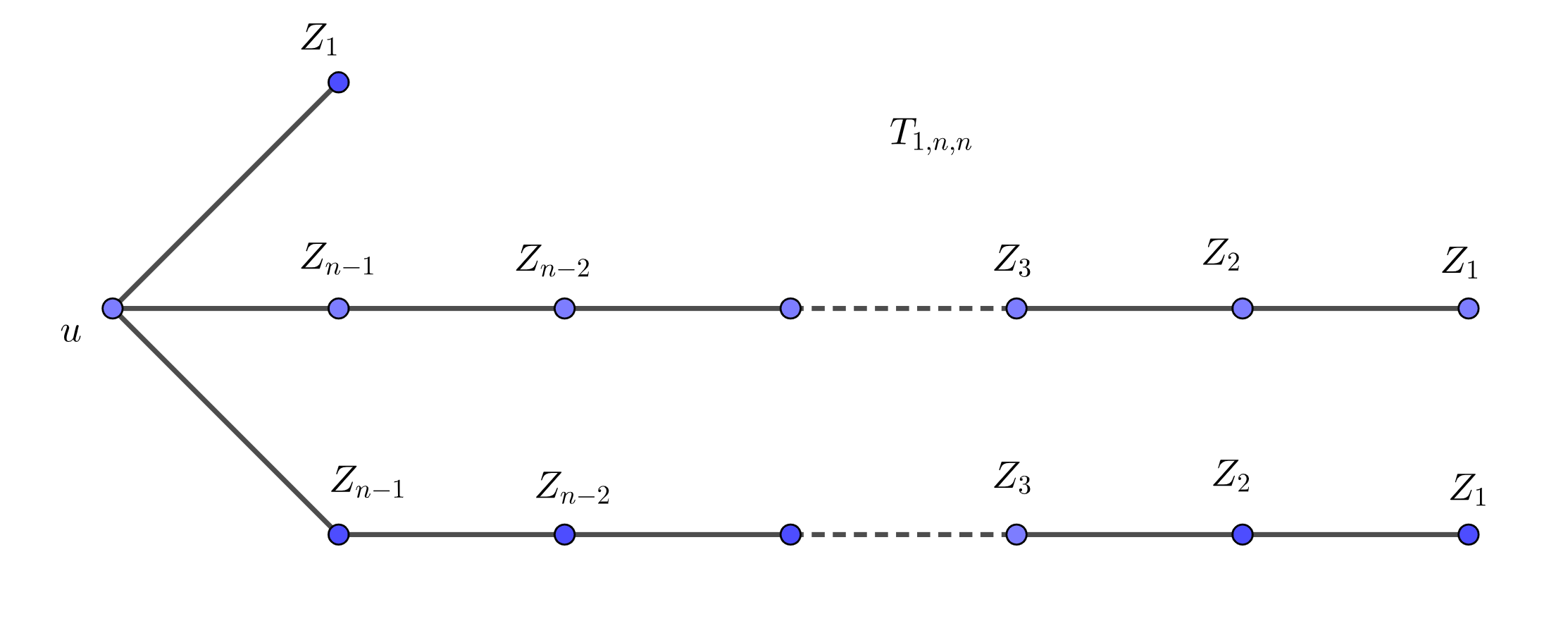}
	\caption{Initialization (left) and output (right) of $\mbox{Diag}(M_{T_{1,n,n}}(s), -\lambda_n)$ for $T_{1,n,n}$.}
	\label{fig:diag_t1nn}
\end{figure}
We see that $Z_1=1-\lambda, ~Z_2=1+s^2 -\lambda-\frac{s^2}{1-\lambda}$ and in general $Z_j=1+s^2 -\lambda-\frac{s^2}{Z_{j-1}}$, for $j=2,\ldots, n-1$. It is key to notice that because $\lambda$ is the spectral radius of $T_{1n,n}$, from Theorem \ref{thm:inertia}, all the $Z_j <0$ and when root $u$ is processed, we have
\begin{equation}\label{eq:root}
 1+2s^2-\lambda-2\frac{s^2 }{Z_{n-1}}-\frac{s^2}{Z_1} =0.
\end{equation}
\end{example}

Understanding the recurrence relation given by sequence $Z_j$ of the above example is crucial for this paper and we explain it in the sequel.

\bigskip

\subsection{Rational recurrences} \label{sec:recurrences}  The Algorithm Diagonalize, when applied to trees having pendant paths naturally produces a rational recurrence which is extremely useful to deal with eigenvalue location tasks. Namely,  we have the recurrence $x_{j}= \varphi(x_{j-1})$, where $\varphi(t)=\alpha+\frac{\gamma}{t}, \; t \neq 0$ is a rational function. A graphical  illustration for  $\varphi(t)=-3 -\frac{2}{t}, \; t \neq 0$  is given by Figure~\ref{fig:exp_neg_neg}.
\begin{figure}[H]
	\centering
	\includegraphics[width=6cm]{ 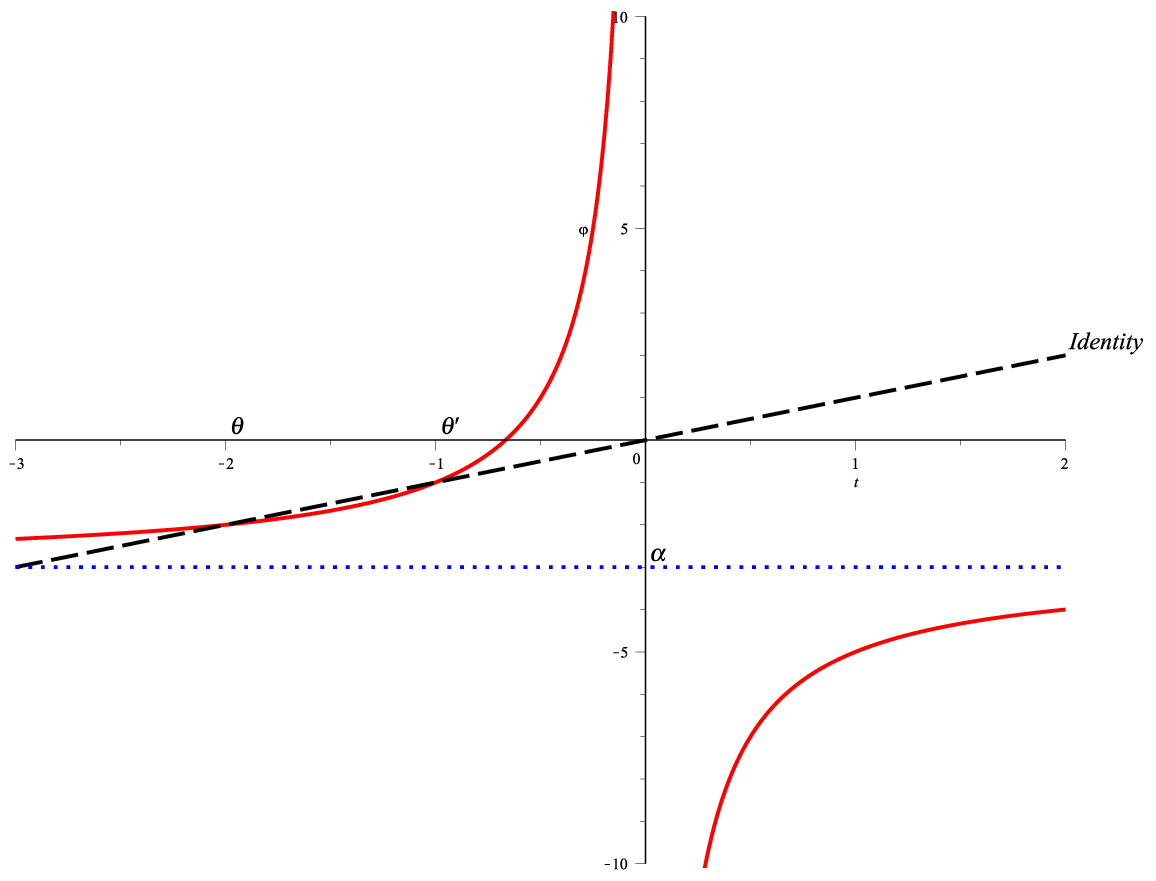}
	\caption{ Rational function $\varphi(t)=-3 -\frac{2}{t}, \; t \neq 0$.}\label{fig:exp_neg_neg}
\end{figure}
Here,  $\alpha<0$ and $\gamma<0$ produces two fixed points $\theta$ and $\theta'$, because $\Delta=\alpha^2+ 4\gamma >0$. In this case, $\theta= \frac{\alpha}{2} - \frac{1}{2}\sqrt{\alpha^2+ 4\gamma} $ and $\theta'= \frac{\alpha}{2} +\frac{1}{2}\sqrt{\alpha^2+ 4\gamma} $, so that $\theta < \theta'<0$, because $\theta \theta'=-\gamma>0$.

Since we are dealing with a rational recurrence we must require that the initial condition $x_1$ be such that $x_{j}\neq 0$ for all $j \geq 1$. Thus, when we are talking about solutions of the rational recurrence we are implicitly assuming that the initial condition satisfy $x_1 \not\in \{0\} \cup \mathcal{Z}$, where
\[\mathcal{Z}:=\{y \neq 0\, | \,   \exists \, k\geq 1\, s.t.\, \varphi^k(y)=0\} = \bigcup_{k\geq 1} \varphi^{-k}(0).\]

\begin{definition}
	The set $\mathcal{Z}$ is called the null set for the rational recurrence $x_{j+1}= \varphi(x_{j})$.
\end{definition}

We recall the following theorem from \cite{OliveTrevAppDiff} (see also the extended version \cite{OliveTrevAppDiffArXiv}).
\begin{theorem} \cite[Proposition 3]{OliveTrevAppDiffArXiv} \label{thm: dyn neg neg Delta posit} Let  $\alpha<0$ and  $\gamma<0$ be fixed numbers such that  $\Delta=\alpha^2+ 4\gamma >0$ and let $\varphi(t)=\alpha+\frac{\gamma}{t}, t\neq 0$. Define $I=(-\infty, -\sqrt{-\gamma})$, $I'=(-\sqrt{-\gamma}, 0)$, $\theta=  \frac{\alpha}{2} - \frac{1}{2}\sqrt{\alpha^2+ 4\gamma}$ and $\theta'= \frac{\alpha}{2} + \frac{1}{2}\sqrt{\alpha^2+ 4\gamma}$. Then, the recurrence $x_{j}= \varphi(x_{j-1})$ satisfy
	\begin{enumerate}
		\item  $\theta$ is an attracting fixed point of $\varphi(t)$ for  $t\in I$, while $\theta'$ is a repelling fixed point for  $t \in I'$.
		\item  The null set $\mathcal{Z}$ is a decreasing sequence $(c_j)_{j\in \mathbb{N}}$ contained in $(\theta', -\frac{\gamma}{\alpha}]$, with $c_1=-\frac{\gamma}{\alpha}$ and $\lim_{j\to \infty} c_j =\theta'$, 
		\item If $x_1 \in (-\infty, \theta)$, then $x_j$ increases and converges to $\theta$ for $j \geq 1$. If $x_1 \in (\theta,\theta')$, then $x_j$ decreases and converges to $\theta$ for $j \geq 1$.
		\item If $x_1 \in (\theta', 0)$, then there exists $m \in \mathbb{N}$ such that $x_j$ increases for $j=1,\ldots, m-1$ and $x_m>0$. Then, $x_{m+1}<\theta$ and $x_j$ increases and converges to $\theta$ for $j \geq m+1$. If $x_1=\theta$ (resp. $x_1=\theta'$), then $x_j=\theta$ (resp. $x_j=\theta'$), for $j \geq 1$.
	\end{enumerate}
\end{theorem}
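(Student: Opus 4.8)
The plan is to treat $\varphi(t)=\alpha+\frac{\gamma}{t}$ as a real Möbius transformation and to extract every assertion from two elementary facts: its monotonicity and the sign of $\varphi(t)-t$. First I would record the algebraic preliminaries. The fixed points solve $t^2-\alpha t-\gamma=0$, so by Vieta's relations $\theta+\theta'=\alpha<0$ and $\theta\theta'=-\gamma>0$; hence both are negative, and since $\theta\theta'=(\sqrt{-\gamma})^2$ with $|\theta|>|\theta'|$, one obtains the crucial ordering $\theta<-\sqrt{-\gamma}<\theta'<0$. Differentiating gives $\varphi'(t)=-\gamma/t^2>0$, so $\varphi$ is strictly increasing on each of $(-\infty,0)$ and $(0,\infty)$, and $|\varphi'(t)|=(-\gamma)/t^2$ lies below/above $1$ exactly on $I=(-\infty,-\sqrt{-\gamma})$ and $I'=(-\sqrt{-\gamma},0)$. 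Using $\theta\theta'=-\gamma$ I would compute the multipliers $\varphi'(\theta)=\theta'/\theta\in(0,1)$ and $\varphi'(\theta')=\theta/\theta'>1$, which together with $\theta\in I$ and $\theta'\in I'$ yields part (1).

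The analytic engine for parts (3) and (4) is the identity $\varphi(t)-t=-\frac{(t-\theta)(t-\theta')}{t}$, obtained by clearing denominators. Reading off its sign on the three intervals cut out by the (negative) numbers $\theta,\theta',0$, I get $\varphi(t)>t$ on $(-\infty,\theta)$, $\varphi(t)<t$ on $(\theta,\theta')$, and $\varphi(t)>t$ on $(\theta',0)$. Since $\varphi$ is increasing and fixes $\theta,\theta'$, it preserves the intervals $(-\infty,\theta)$ and $(\theta,\theta')$; a standard monotone-and-bounded argument then gives part (3): from $(-\infty,\theta)$ the orbit increases to $\theta$, and from $(\theta,\theta')$ it decreases to $\theta$. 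I would confirm the limit globally by the conjugacy $y_j=\frac{x_j-\theta}{x_j-\theta'}$, under which $y_{j+1}=\frac{\theta'}{\theta}\,y_j$ with multiplier $\mu=\theta'/\theta\in(0,1)$, so $y_j\to 0$ and hence $x_j\to\theta$ for every admissible start.

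For part (4) the orbit begins in $(\theta',0)$, where it strictly increases. The hard part is showing it cannot remain there: a monotone bounded orbit would converge to a fixed point in $(\theta',0]$, but $(\theta',0)$ contains none, and the limit $0^-$ is impossible since $\varphi(t)\to+\infty$ as $t\to0^-$. Thus the orbit exits upward at a first index $m$ with $x_m>0$ (it never lands on $0$, because $x_1\notin\mathcal{Z}$). Then $x_{m+1}=\alpha+\gamma/x_m<\alpha=\theta+\theta'<\theta$, so $x_{m+1}\in(-\infty,\theta)$ and part (3) takes over to force monotone convergence to $\theta$; the exact cases $x_1\in\{\theta,\theta'\}$ are immediate. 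I expect this escape-and-reentry step to be the main obstacle: proving the excursion leaves $(\theta',0)$ in finite time, lands strictly above $0$, and re-enters the convergent regime below $\theta$, all while invoking the null-set hypothesis to exclude hitting $0$.

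Finally, for the null set in part (2) I would observe that $c_k:=\varphi^{-k}(0)$ is the forward orbit of $0$ under the inverse Möbius map $\psi=\varphi^{-1}$, explicitly $\psi(y)=\gamma/(y-\alpha)$, with $c_1=-\gamma/\alpha$ and $c_{k+1}=\psi(c_k)$. The map $\psi$ shares the fixed points $\theta,\theta'$ but with reversed stability, so $\theta'$ is now attracting with multiplier $\mu=\theta'/\theta\in(0,1)$. The conjugacy $z_k=\frac{c_k-\theta'}{c_k-\theta}$ gives $z_{k+1}=\mu z_k$ with $z_0=\theta'/\theta=\mu$, whence $z_k=\mu^{k+1}\downarrow 0$; since the inverse coordinate change $z\mapsto t$ is increasing, $(c_k)$ is strictly decreasing with $\lim_{k}c_k=\theta'$ and $c_k\in(\theta',-\gamma/\alpha]$, which is exactly part (2). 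Equivalently, one checks directly that $\psi$ maps $(\theta',0)$ into itself with $\psi(t)<t$, giving the same monotone decrease to $\theta'$.
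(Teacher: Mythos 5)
Your proof is correct, but there is an important contextual point: this paper does not prove the statement at all --- it is imported verbatim, as the citation tag indicates, from Proposition 3 of the reference \cite{OliveTrevAppDiffArXiv}, so there is no in-paper argument to compare against. Your write-up therefore supplies a genuinely self-contained proof, and it holds up under checking. The two pillars --- the sign identity $\varphi(t)-t=-\frac{(t-\theta)(t-\theta')}{t}$ combined with the monotonicity of $\varphi$ on $(-\infty,0)$, and the M\"obius conjugacy $y=\frac{x-\theta}{x-\theta'}$ linearizing $\varphi$ with multiplier $\mu=\theta'/\theta\in(0,1)$ --- deliver every item: the ordering $\theta<-\sqrt{-\gamma}<\theta'<0$ follows from Vieta as you say; part (1) from the multipliers $\varphi'(\theta)=\theta'/\theta$, $\varphi'(\theta')=\theta/\theta'$ and the location of $\theta,\theta'$ in $I,I'$; part (3) from interval preservation plus monotone-bounded convergence (the limit is a fixed point, and $0$ is excluded since the intervals are negative); part (4) from your escape argument, where the key steps --- no fixed point in $(\theta',0)$, limit $0^-$ impossible because $\varphi(t)\to+\infty$ as $t\to 0^-$, the exit value positive because $\varphi$ maps $(\theta',0)$ onto $(\theta',+\infty)$ and the null-set hypothesis rules out hitting $0$, and then $x_{m+1}<\alpha=\theta+\theta'<\theta$ --- are all sound; and part (2) by running the same machinery on $\psi=\varphi^{-1}$, for which $\theta'$ becomes attracting (here your identification of $\mathcal{Z}$ with the single orbit $(c_k)=(\psi^k(0))$ is legitimate because $\varphi$ is injective, so each $\varphi^{-k}(0)$ is one point). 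Compared with simply invoking the cited reference, your conjugacy argument has the added virtue of giving convergence $x_j\to\theta$ uniformly for every admissible initial condition (any $x_1\notin\{0,\theta'\}\cup\mathcal{Z}$) in one stroke, with the interval-by-interval sign analysis needed only to certify the monotonicity claims in parts (3) and (4).
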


We observe that recurrence obtained in Example \ref{ex:T1nn} given by

	\begin{equation}
		\begin{cases}
		Z_1 & =1-\lambda \\
		Z_j & =\varphi(Z_{j-1}), \text{ for } 2 \leq j \leq n-1,
	\end{cases}
	\end{equation}\label{eq:rec_fund}
falls in this frame work with $\varphi(t)=1+s^2-\lambda -\frac{s^2}{t}$, $t\neq 0$,  $\alpha:=1+s^2-\lambda <0$ and $\gamma:=-s^2 <0$. It satisfies the hypothesis of Theorem~\ref{thm: dyn neg neg Delta posit} and, in particular, $Z_1 \not\in \mathcal{Z}$, if  $\Delta=\alpha^2+ 4\gamma >0$, or equivalently $$\lambda > (1+|s|)^2,$$
which motivates the following notation for $s$.
\begin{definition}\label{def:adapted}
   Given $\lambda>1$ a real number, we say that $s$ is adapted to $\lambda$ if $\lambda>(1+|s|)^2$ (or equivalently $1- \sqrt{\lambda}< s< \sqrt{\lambda} -1$).
\end{definition}

\subsection{Some spectral properties of the deformed Laplacian Matrix}\label{sec:Spectrum of deformed Laplacian associated to a tree}

In reference \cite{DOT2025}, a few properties of the spectrum of the deformed Laplacian $M_G(s)$ were obtained. This section is devoted to recover some of them, particularly for the case when $G$ is a tree.

\begin{lemma}[\cite{DOT2025}]  \label{lem:initial lower bound}
Let $G$ be a simple, undirected and connected graph of order $n$ with maximum degree $\Delta$. If $s\in (-\infty, 0]\cup [1,+\infty)$, then
\begin{align*}
\rho(M_{G}(s))\geq \frac{1}{2}\left(s^{2}(\Delta-1)+2+\vert s\vert\sqrt{s^{2}(\Delta-1)^{2}+4\Delta}\right).
\end{align*}
The equality holds if and only if $G\cong K_{1,n-1}$.
\end{lemma}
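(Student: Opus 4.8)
The plan is to recognize the right-hand side as the spectral radius of the deformed Laplacian of the star $K_{1,\Delta}$ and then compare it with $\rho(M_G(s))$ via the Rayleigh quotient. First I would compute $\mu := \rho(M_{K_{1,\Delta}}(s))$ explicitly. The matrix $M_{K_{1,\Delta}}(s)$ is an arrowhead matrix with center diagonal $1+s^2(\Delta-1)$, leaf diagonals $1$, and off-diagonals $-s$; its leaf-symmetric mode $x_{\mathrm{center}}=a$, $x_{\mathrm{leaf}}=b$ reduces the eigenvalue equation to the quadratic
\[
\lambda^2 - \bigl(2+s^2(\Delta-1)\bigr)\lambda + (1-s^2)=0,
\]
whose larger root is exactly $\tfrac12\bigl(s^2(\Delta-1)+2+|s|\sqrt{s^2(\Delta-1)^2+4\Delta}\bigr)$. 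The remaining eigenvalues are $1$ (from the $\Delta-1$ antisymmetric leaf modes) and the smaller root; a quick sign check (the root sum $2+s^2(\Delta-1)$ is positive) confirms the larger root is the spectral radius $\mu$. So the lemma is the assertion $\rho(M_G(s))\ge\mu$ with equality iff $G\cong K_{1,n-1}$.

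For the inequality, let $v$ be a vertex of degree $\Delta$ with neighborhood $N(v)=\{u_1,\dots,u_\Delta\}$, and define a test vector $x$ supported on $N[v]$ by $x_v=a$, $x_{u_i}=b$ (with $(a,b)$ the Perron vector of the star found above) and $x=0$ elsewhere. Expanding the Rayleigh quotient and subtracting the star's value $\mu\,x^\top x=\mu(a^2+\Delta b^2)$, the edge contributions incident to $v$ cancel exactly, and using the degree surplus $\sum_i(d_{u_i}-1)=2\,e(N(v))+e\bigl(N(v),\overline{N[v]}\bigr)$ one is left with
\[
x^\top M_G(s)\,x-\mu\,x^\top x=b^2\Bigl[\,2\,e(N(v))\,s(s-1)+s^2\,e\bigl(N(v),\overline{N[v]}\bigr)\Bigr].
\]
Here $e(N(v))$ counts edges inside the neighborhood and $e(N(v),\overline{N[v]})$ counts edges leaving it. The factor $s(s-1)$ is nonnegative precisely on $(-\infty,0]\cup[1,+\infty)$, which is exactly the hypothesis on $s$, so the bracket is $\ge0$; since $\mu>0$ this gives $\rho(M_G(s))\ge\lambda_{\max}(M_G(s))\ge x^\top M_G(s)x/x^\top x\ge\mu$.

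For the equality case I would run the chain backwards: $\rho=\mu$ forces the bracket to vanish. When $s<0$ or $s>1$ strictly, $s(s-1)>0$ forces $e(N(v))=0$ and $e(N(v),\overline{N[v]})=0$, so the component of $v$ is the star $K_{1,\Delta}$, and connectedness yields $G\cong K_{1,n-1}$; conversely the star attains equality because $x$ is then its genuine Perron vector. The step I expect to be the main obstacle is precisely this equality direction: one must additionally ensure the spectral radius is the \emph{largest} eigenvalue rather than the most negative one (so that the Rayleigh bound is actually tight), and the boundary values $s=0,1$ need separate care. Indeed at $s=1$ the estimate degenerates to the classical $\lambda_{\max}(L)\ge\Delta+1$, whose equality case is the presence of a dominating vertex and is broader than the star family, so the clean ``iff $K_{1,n-1}$'' characterization is transparent on the open parts $s<0$ and $s>1$ and should be argued delicately at the endpoints.
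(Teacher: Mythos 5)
The paper never proves this lemma: it is imported from \cite{DOT2025} as a black box, so there is no internal proof to compare against and your argument must stand on its own. It does. The reduction of $M_{K_{1,\Delta}}(s)$ to the quadratic $\lambda^2-(2+s^2(\Delta-1))\lambda+(1-s^2)=0$ is correct, its discriminant factors as $s^2\left(s^2(\Delta-1)^2+4\Delta\right)$, and since the two roots sum to $2+s^2(\Delta-1)>0$ while the quadratic is nonpositive at $\lambda=1$, the larger root dominates both $1$ and the modulus of the smaller root, so it is genuinely $\rho(M_{K_{1,\Delta}}(s))=\mu$. The Rayleigh step is also sound: planting the star's Perron pair $(a,b)$ on a maximum-degree vertex, the mixed terms $-2s\Delta ab$ cancel and the surplus $b^2\left[2e(N(v))\,s(s-1)+s^2\,e\left(N(v),\overline{N[v]}\right)\right]$ is nonnegative exactly on $(-\infty,0]\cup[1,+\infty)$, giving $\rho\ge\lambda_{\max}\ge x^\top M_G(s)x/x^\top x\ge\mu$. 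For the equality direction, you need not actually worry about $\rho$ being realized by the most negative eigenvalue: since $\rho\ge\lambda_{\max}\ge\mu$ holds unconditionally, $\rho=\mu$ already squeezes every intermediate inequality into an equality, so the bracket vanishes and, for $s<0$ or $s>1$, connectedness gives $G\cong K_{1,n-1}$. Your suspicion about the endpoints is correct, and it exposes a defect of the statement as quoted rather than of your proof: at $s=0$ one has $M_G(0)=I$, so every graph attains equality, and at $s=1$ the bound reads $\rho(L)\ge\Delta+1$, which is attained by any connected graph with a dominating vertex (e.g.\ $K_n$, whose Laplacian spectral radius is $n=\Delta+1$); so the ``if and only if'' clause cannot hold at $s\in\{0,1\}$ as written and must be excluded or treated separately there.
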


\begin{lemma}[\cite{DOT2025}] \label{lem:bound}
Let $G$ be a simple, undirected and connected graph of order $n$ with maximum degree $\Delta$. Let $M_G(s)$ be its deformed Laplacian matrix. Then,
	\begin{align*}
		\rho(M_{G}(s))\leq 1+s^2(\Delta-1)+|s|\rho(A(G)).
	\end{align*}
\end{lemma}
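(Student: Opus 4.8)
The plan is to split $M_G(s)$ into a diagonal part and an adjacency part and apply Weyl's inequality for the largest eigenvalue of a sum of real symmetric matrices. Writing
$$M_G(s) = \bigl[I + s^2(D-I)\bigr] + (-sA),$$
both summands are real and symmetric, so Weyl's inequality yields
$$\lambda_{\max}(M_G(s)) \le \lambda_{\max}\!\bigl(I + s^2(D-I)\bigr) + \lambda_{\max}(-sA).$$

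First I would evaluate the diagonal term. The matrix $I + s^2(D-I)$ is diagonal with entries $1 + s^2(d_{v_i}-1)$; since $s^2 \ge 0$ and $d_{v_i} \le \Delta$ for all $i$, its largest eigenvalue is exactly $1 + s^2(\Delta-1)$, attained at a vertex of maximum degree. Next I would bound the adjacency term. The spectrum of $-sA$ is $\{-s\mu : \mu \in \sigma(A)\}$, and for every eigenvalue $\mu$ of $A$ one has $-s\mu \le |s|\,|\mu| \le |s|\,\rho(A(G))$, using that $\rho(A(G)) = \max_{\mu \in \sigma(A)}|\mu|$ for the symmetric adjacency matrix. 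Hence $\lambda_{\max}(-sA) \le |s|\,\rho(A(G))$, and combining the two estimates gives $\lambda_{\max}(M_G(s)) \le 1 + s^2(\Delta-1) + |s|\,\rho(A(G))$.

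It then remains to pass from $\lambda_{\max}$ to the spectral radius $\rho$. In the setting relevant to this paper ($s \le 0$, equivalently $s \ge 0$ via the bipartite identity $\sigma(M_G(s)) = \sigma(M_G(-s))$ recalled in Section~\ref{sec:PF}), the matrix $M_G(s)$ is entrywise nonnegative and irreducible, so by Perron--Frobenius its spectral radius equals its largest eigenvalue, $\rho(M_G(s)) = \lambda_{\max}(M_G(s))$. I expect this identification to be the only delicate point: the Weyl estimate controls the top eigenvalue directly, but one must confirm that the largest eigenvalue dominates in absolute value, and it is precisely the nonnegativity of $M_G(s)$ that guarantees this. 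With that identification in hand, the displayed chain of inequalities delivers the claimed bound.
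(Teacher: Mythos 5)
The paper itself gives no proof of this lemma --- it is quoted verbatim from \cite{DOT2025} --- so your proposal can only be judged on its own merits. The core of it is fine: the decomposition $M_G(s) = \bigl[I + s^2(D-I)\bigr] + (-sA)$ together with Weyl's inequality correctly yields $\lambda_{\max}(M_G(s)) \le 1 + s^2(\Delta-1) + |s|\rho(A(G))$, and both individual estimates (the diagonal term and the term $\lambda_{\max}(-sA) \le |s|\rho(A(G))$) are right.

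The gap is in your last step, the passage from $\lambda_{\max}$ to $\rho$. The lemma is stated for an \emph{arbitrary} simple connected graph $G$ and \emph{arbitrary} $s \in \mathbb{R}$, but your identification $\rho(M_G(s)) = \lambda_{\max}(M_G(s))$ via Perron--Frobenius needs $M_G(s)$ to be entrywise nonnegative, which holds only for $s \le 0$; and the reduction of $s>0$ to $s\le 0$ via $\sigma(M_G(s)) = \sigma(M_G(-s))$ is available only for \emph{bipartite} $G$. For $s>0$ and a non-bipartite graph (e.g.\ an odd cycle), neither tool applies, and a priori $\rho(M_G(s))$ could equal $-\lambda_{\min}(M_G(s))$, which your Weyl estimate does not control. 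The repair is one line. Either apply Weyl to $-M_G(s) = \bigl[-(I + s^2(D-I))\bigr] + sA$, giving $-\lambda_{\min}(M_G(s)) \le -(1+s^2(\delta-1)) + |s|\rho(A(G)) \le |s|\rho(A(G))$ (here $\delta \ge 1$ is the minimum degree, valid for connected $G$ with $n \ge 2$), which is below the claimed bound; or, more economically, bypass Weyl altogether and use the triangle inequality for the spectral norm: since all matrices involved are symmetric, $\rho(M_G(s)) = \| M_G(s)\|_2 \le \| I + s^2(D-I)\|_2 + \| sA \|_2 = 1 + s^2(\Delta-1) + |s|\rho(A(G))$. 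Either version turns your argument into a complete proof of the lemma in the stated generality.
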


\begin{proposition}[\cite{DOT2025}] \label{prop:properties for trees}
Let $T\neq K_1$ be a tree and let $M_T(s)$ be its deformed Laplacian matrix. Then
\begin{itemize}
    \item[$(1)$] $0\in Spect(M_T(s))$ if and only if $s=\pm 1$;
    \item[$(2)$] $M_T(s)$ is positive definite if, and only if, $|s|<1$;
    \item[$(3)$] $\rho(M_T(s))>1$;
    \item[$(4)$] If $T$ has a pendant path $P_2$ then $\rho(M_T(s))>1+s^2$;
    \item[$(5)$] Let $v$ be a pendant vertex of a tree $T$. If $T'=T\setminus \{v\}$ then $\rho(M_{T'}(s))<\rho(M_{T}(s))$;
    \item[$(6)$] If $\Delta(T)\geq 4$, then $\rho(M_{T}(s))>(1+|s|)^2$ (analogously, $\rho(M_{T}(s))>1+\sqrt{3}|s|+s^2$ if $\Delta(T)\geq 3$);
    \item[$(7)$] If $T$ is a starlike tree $T:=[q_1,\ldots,q_k]$ then $\rho(M_{T}(s))\leq 1+s^2(\Delta(T)-1) +|s|\frac{k}{\sqrt{k-1}}$.
\end{itemize}
\end{proposition}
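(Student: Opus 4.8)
The plan is to dispatch the seven items with a compact toolbox: the algorithm Diagonalize together with Theorem~\ref{thm:inertia} for the exact eigenvalue counts in (1), (2) and (4); Perron--Frobenius monotonicity for (3) and (5); and the a priori estimates of Lemmas~\ref{lem:initial lower bound} and~\ref{lem:bound} for (6) and (7). Throughout I would use the bipartite symmetry $\sigma(M_T(s))=\sigma(M_T(-s))$ to reduce to $s\le 0$ whenever nonnegativity/irreducibility is needed, and I read the strict inequalities (3), (4), (6) for $s\neq 0$ (at $s=0$ one has $M_T(0)=I$). For (1) and (2) I would run $\mathrm{Diagonalize}(M_T(s),0)$ with any bottom-up order. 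A leaf outputs $m_{ii}=1$, and the inductive claim is that \emph{every} non-root vertex outputs $1$: a non-root vertex $v$ of degree $d$ whose $d-1$ children are all equal to $1$ gets $d_v=(1+s^2(d-1))-(d-1)\tfrac{s^2}{1}=1$. The root $r$ of degree $d_r$ has $d_r$ children (no parent), so $d_r=(1+s^2(d_r-1))-d_r\tfrac{s^2}{1}=1-s^2$; no zero child is met before the last step, so the regular branch always applies. Hence $D=\mathrm{diag}(1,\dots,1,1-s^2)$ is congruent to $M_T(s)$, and Theorem~\ref{thm:inertia} with $x=0$ gives both statements at once, since the single possibly-zero entry $1-s^2$ vanishes exactly when $s=\pm1$ (proving (1)) and all $n$ entries are positive exactly when $|s|<1$ (proving (2)); as a by-product $\det M_T(s)=1-s^2$.

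For (3) I would use a row-sum estimate: for $s\le 0$ the matrix $M_T(s)$ is nonnegative and irreducible, every vertex has a neighbour, so each row sum is at least $1+|s|$, and since the spectral radius of a nonnegative matrix dominates its minimum row sum, $\rho(M_T(s))\ge 1+|s|>1$, extended to all $s\neq0$ by symmetry. For (4) I would run $\mathrm{Diagonalize}(M_T(s),-(1+s^2))$ with $T$ rooted away from the pendant $P_2$; writing $w$ for the pendant leaf and $v$ for its degree-$2$ neighbour, one gets $d_w=1-(1+s^2)=-s^2\neq0$ and then $d_v=(1+s^2)-(1+s^2)-\tfrac{s^2}{-s^2}=1>0$, so at least one eigenvalue exceeds $1+s^2$. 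For (5) I would argue by interlacing: removing the pendant vertex $v$ with neighbour $u$, the proper principal submatrix satisfies $\rho\big(M_T(s)[T\setminus\{v\}]\big)<\rho(M_T(s))$ by strict Perron--Frobenius monotonicity, while $M_{T'}(s)=M_T(s)[T\setminus\{v\}]-s^2\,e_u e_u^{\top}\preceq M_T(s)[T\setminus\{v\}]$ because deleting $v$ lowers $\deg(u)$ by one; eigenvalue monotonicity then gives $\rho(M_{T'}(s))\le\rho\big(M_T(s)[T\setminus\{v\}]\big)<\rho(M_T(s))$.

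For (6) I would first apply (5) repeatedly to peel $T$ down to the star $K_{1,\Delta}$ formed by a maximum-degree vertex and its neighbours, obtaining $\rho(M_T(s))\ge\rho(M_{K_{1,\Delta}}(s))$. By the equality case of Lemma~\ref{lem:initial lower bound} one has $\rho(M_{K_{1,\Delta}}(s))=\tfrac12\big(s^2(\Delta-1)+2+|s|\sqrt{s^2(\Delta-1)^2+4\Delta}\big)$, and a direct manipulation reduces the desired $\rho>(1+|s|)^2$ to $s^2(\Delta-3)+|s|\sqrt{s^2(\Delta-1)^2+4\Delta}>4|s|$, which holds for $\Delta\ge4$ since $\sqrt{s^2(\Delta-1)^2+4\Delta}\ge\sqrt{4\Delta}\ge4$; the $\Delta\ge3$ variant is identical with $2\sqrt3\,|s|$ in place of $4|s|$.

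Finally, (7) is the part I expect to be the real obstacle. By Lemma~\ref{lem:bound} it suffices to prove the adjacency bound $\rho(A(T))\le\frac{k}{\sqrt{k-1}}$ for a starlike tree with $k$ legs, and I would obtain this from the Collatz--Wielandt upper bound $\rho(A)\le\max_i (Ay)_i/y_i$ with the positive weight vector assigning $y=\beta^{\,d}$ to a vertex at distance $d$ from the centre, where $\beta=1/\sqrt{k-1}$. The centre gives ratio $k\beta=\frac{k}{\sqrt{k-1}}$, an interior leg vertex gives $\beta^{-1}+\beta=\frac{k}{\sqrt{k-1}}$, and a leg endpoint gives $\beta^{-1}=\sqrt{k-1}<\frac{k}{\sqrt{k-1}}$, so the maximal ratio is exactly $\frac{k}{\sqrt{k-1}}$; substituting this and $\Delta=k$ into Lemma~\ref{lem:bound} completes (7). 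The delicate point is spotting the geometric weight of ratio $1/\sqrt{k-1}$ that equalizes the ratios at the centre and at every interior leg vertex; everything else is a routine verification.
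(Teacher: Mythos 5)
There is nothing in this paper to compare your proof against: the proposition is imported verbatim from \cite{DOT2025} and the present manuscript gives no proof of it, so I can only judge your proposal on its own merits — and it holds up. Your Diagonalize run for (1)--(2) is correct: with $x=0$ every leaf and every non-root vertex of degree $d$ outputs $\bigl(1+s^2(d-1)\bigr)-(d-1)\frac{s^2}{1}=1$, the root outputs $1-s^2$, the regular branch always applies since no child value vanishes, and Theorem~\ref{thm:inertia} then yields both items at once (your argument even shows $M_T(s)$ has exactly one negative eigenvalue when $|s|>1$). The minimum row-sum bound $\rho\ge 1+|s|$ for (3), the local computation $d_w=-s^2$, $d_v=1>0$ for (4), the splitting $M_{T'}(s)=M_T(s)[T\setminus\{v\}]-s^2e_ue_u^{\top}$ combined with Weyl monotonicity and strict Perron--Frobenius monotonicity for (5), the peeling to $K_{1,\Delta}$ plus the equality case of Lemma~\ref{lem:initial lower bound} for (6) (I checked the reduction to $s^2(\Delta-3)+|s|\sqrt{s^2(\Delta-1)^2+4\Delta}>4|s|$, which indeed holds strictly for $\Delta\ge4$, $s\neq0$, and its $2\sqrt3|s|$ analogue for $\Delta\ge3$), and the Collatz--Wielandt weighting $y_v=(k-1)^{-d(v)/2}$ for (7), whose three ratio computations are exact, are all sound; note also that your use of Lemma~\ref{lem:initial lower bound} at $-|s|$ is legitimate because of the bipartite symmetry, since that lemma is only stated for $s\in(-\infty,0]\cup[1,\infty)$. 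Two caveats, both about the statement rather than your argument. First, your decision to read (3), (4), (6) only for $s\neq0$ is not optional but necessary — at $s=0$ all three are false since $M_T(0)=I$ — and the same restriction is silently needed in (5), both for its truth and for the irreducibility your Perron--Frobenius step invokes. Second, in (5) your chain establishes $\lambda_{\max}(M_{T'}(s))<\rho(M_T(s))$, which is the correct reading in the paper's framework (Section~\ref{sec:PF} equates spectral radius with largest eigenvalue); in the degenerate case $T'=K_1$ with $|s|>1$ the matrix $M_{K_1}(s)=[\,1-s^2\,]$ has spectral radius $s^2-1\neq\lambda_{\max}$, and the literal absolute-value version of (5) can even fail there (e.g.\ $T=P_2$, $|s|>2$), so it is worth stating explicitly that $\rho$ means largest eigenvalue, which is exactly what your proof delivers.
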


We remark that Proposition~\ref{prop:properties for trees}~(5)  shows that for every value of $s \in \mathbb{R}$, the spectral radius of sub-trees does not increase, a result that is not true for general graphs (see \cite{DOT2025}).

\begin{corollary}\label{cor:s adapted lambda}
	     Let $T\neq P_n$ be a tree ($\Delta(T)\geq 3$). Then
	     \begin{enumerate}
	     	\item[$(1)$] If $s$ is super-Laplacian then it is adapted to $\lambda$ for any $\lambda \geq \rho(M_T(s))$;
	     	\item[$(2)$] If $s$ is sub-Laplacian and $\Delta(T)\geq 4$ then it is adapted to $\lambda$ for any $\lambda \geq \rho(M_T(s))$;
	     	\item[$(3)$] If $s\neq 0$ is sub-Laplacian, $\Delta(T) = 3$ and contains $T_{1,4,4}$ as a subgraph  then it is adapted to $\lambda$ for any $\lambda \geq \rho(M_T(s))$
	     \end{enumerate}
\end{corollary}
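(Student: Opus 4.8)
The common thread is that each of the three items reduces to a single strict inequality: once we establish
$\rho(M_T(s)) > (1+|s|)^2$, Definition~\ref{def:adapted} immediately makes $s$ adapted to every $\lambda \geq \rho(M_T(s))$. So the plan is to prove this one inequality under each set of hypotheses, and the three cases differ only in which earlier tool does the work.

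For item $(2)$ there is nothing to do: since $\Delta(T)\geq 4$, Proposition~\ref{prop:properties for trees}$(6)$ gives $\rho(M_T(s)) > (1+|s|)^2$ for \emph{every} $s$, in particular for sub-Laplacian $s$. For item $(1)$ I would use Lemma~\ref{lem:initial lower bound}, which applies because $|s|>1$ forces $s\in(-\infty,0]\cup[1,+\infty)$. Its right-hand side is increasing in the maximum degree, and $T\neq P_n$ forces $\Delta(T)\geq 3$, so evaluating at $\Delta=3$ yields $\rho(M_T(s)) \geq s^2+1+|s|\sqrt{s^2+3}$. Subtracting $(1+|s|)^2$ collapses the claim to $|s|\bigl(\sqrt{s^2+3}-2\bigr)>0$, i.e.\ to $s^2>1$, which is exactly the super-Laplacian hypothesis.

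Item $(3)$ is the substantial one. By the bipartite symmetry $\sigma(M_T(s))=\sigma(M_T(-s))$ and the fact that $(1+|s|)^2$ depends only on $|s|$, I may assume $s\in(0,1)$. Since $T$ is a tree containing $T_{1,4,4}$, I can delete the vertices outside a fixed copy of $T_{1,4,4}$ one current-leaf at a time; Proposition~\ref{prop:properties for trees}$(5)$ then gives $\rho(M_T(s))\geq \rho(M_{T_{1,4,4}}(s))$, so it suffices to prove $\rho(M_{T_{1,4,4}}(s)) > (1+s)^2$. Here I would deploy the main tool: set $\lambda:=(1+s)^2$ and run $\mathrm{Diagonalize}(M_{T_{1,4,4}}(s),-\lambda)$; by Theorem~\ref{thm:inertia} it is enough to exhibit one positive diagonal entry. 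The decisive feature of the value $\lambda=(1+s)^2$ is that the path recurrence $\varphi(t)=1+s^2-\lambda-s^2/t$ acquires $\alpha=-2s$ and $\gamma=-s^2$, so its discriminant $\alpha^2+4\gamma$ vanishes; thus Theorem~\ref{thm: dyn neg neg Delta posit} does not apply and I instead compute the few relevant iterates directly. Writing $Z_k=s\,\tilde w_k$, the common factor cancels and one finds $\tilde w_1=-2-s$ together with the $s$-independent map $\tilde w_k=-2-1/\tilde w_{k-1}$, so that the leaf and the path entries $Z_1,Z_2,Z_3$ (and the pendant $P_1$ entry, which equals $Z_1$) are all negative. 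Hence the only entry that can be positive is the one at the degree-$3$ vertex $u$, which takes the form appearing on the left of \eqref{eq:root}, namely $d_u=(s^2-2s)-2s^2/Z_3-s^2/Z_1$.

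The heart of the proof, and the step I expect to be most delicate, is showing $d_u>0$ on $(0,1)$. Because the fixed point here is parabolic, the number of path vertices genuinely matters: with one fewer path vertex (the tree $T_{1,3,3}$) the analogue of $d_u$ has a negative leading coefficient as $s\to 0$ and the inequality fails, which is precisely why $T_{1,4,4}$ — together with the exclusion $s\neq 0$ — appears in the statement. Using the explicit values $\tilde w_1=-2-s$, $\tilde w_2=(-3-2s)/(2+s)$, $\tilde w_3=-(4+3s)/(3+2s)$, I would substitute into $d_u$, clear the positive denominator $(2+s)(4+3s)$, and verify that the numerator is positive throughout $(0,1)$. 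I expect it to factor cleanly; indeed the computation reduces $d_u$ to
\[
 d_u=\frac{s^2(3s+5)(s+1)}{(2+s)(4+3s)},
\]
which is manifestly positive for $s\in(0,1)$. This produces an eigenvalue of $M_{T_{1,4,4}}(s)$ exceeding $(1+s)^2$, finishing item $(3)$ and hence the corollary.
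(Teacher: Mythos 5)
Your proposal is correct and takes essentially the same route as the paper: item $(2)$ via Proposition~\ref{prop:properties for trees}$(6)$, item $(1)$ via Lemma~\ref{lem:initial lower bound} evaluated at $\Delta=3$, and item $(3)$ by reducing to $T_{1,4,4}$ through Proposition~\ref{prop:properties for trees}$(5)$ and running Diagonalize at $\lambda=(1+|s|)^2$, where your $d_u=\frac{s^2(3s+5)(s+1)}{(2+s)(4+3s)}$ is exactly the paper's $g(s)=\frac{s^2(3s^2+8s+5)}{(3s+4)(s+2)}$. The only cosmetic differences are that you invoke bipartite symmetry to assume $s\in(0,1)$, so you need only one of the paper's two case functions, and you certify positivity by an explicit factorization rather than by inspecting graphs.
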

\begin{proof}
	 (1) Since $\Delta(T)\geq 3$ it contains $K_{1,3}$ as a subgraph. From Proposition~\ref{prop:properties for trees}(5) and Lemma \ref{lem:initial lower bound}, we obtain $\rho(M_T(s)) \geq \rho(M_{K_{1,3}}(s)) =  \frac{1}{2}\left(s^{2}(\Delta-1)+2+\vert s\vert\sqrt{s^{2}(\Delta-1)^{2}+4\Delta}\right)> 1+ s^2 + 2|s|,$ because $s^2 >1$.
	
	 (2) Follows directly from Proposition~\ref{prop:properties for trees}~(6).
	
	 (3)  From Proposition~\ref{prop:properties for trees}~(5), we obtain $\rho(M_T(s)) \geq \rho(M_{T_{1,4,4}}(s)) $. We claim that $\rho(M_{T_{1,4,4}}(s)) \geq 1+ s^2 + 2|s|= (1+|s|)^2$ with equality only if $s=0$. We perform  $Diag(M_{T_{1,4,4}}(s),-\lambda)$ for $\lambda=(1+|s|)^2$. We aim to find a positive output. At each leaf we initialize $Z_1= 1-\lambda$. If $Z_1>0$ we are done ($\rho(M_{T_{1,4,4}}(s)) > (1+|s|)^2$), otherwise if $Z_1=0$ then we keep these values and produce values $+2$ (positive) and $-\frac{s^2}{2}$ meaning that $\rho(M_{T_{1,4,4}}(s)) > (1+|s|)^2$. Finally, if  $Z_1<0$ we compute $Z_2$. Proceeding in the same fashion we will, in the wort case arrive to the root with $Z_1, Z_2, Z_3<0$.
	
	  Then, we compute the last value at $u$ by the formula
	 \begin{equation}\label{eq: value root 2}
	 	f(s):=1+2s^2-(1+|s|)^2 - 2\frac{s^2}{Z_{4-1}}-\frac{s^2}{1- (1+|s|)^2}
	 \end{equation}
	
	 We now consider the two cases $|s|=s$ and $|s|=-s$ and analyze the graphs   $f(s)$ which should be positive in the respective domains.
	
	 If $|s|=-s$, then
	 \[f(s)={\frac {{s}^{2} \left( 3\,{s}^{2}-8\,s+5 \right) }{ \left( 3\,s-4\right)  \left( s-2 \right) }}
	 \]
	 and, if $|s|=s$, then
	 \[g(s)= {\frac {{s}^{2} \left( 3\,{s}^{2}+8\,s+5 \right) }{ \left( 3\,s+4	\right)  \left( s+2 \right) }}
	 \]
	 whose graphs are given by Figure~\ref{fig:graphs_fg}:
	 \begin{figure}[H]
	 	\centering
	 	\includegraphics[width=4cm]{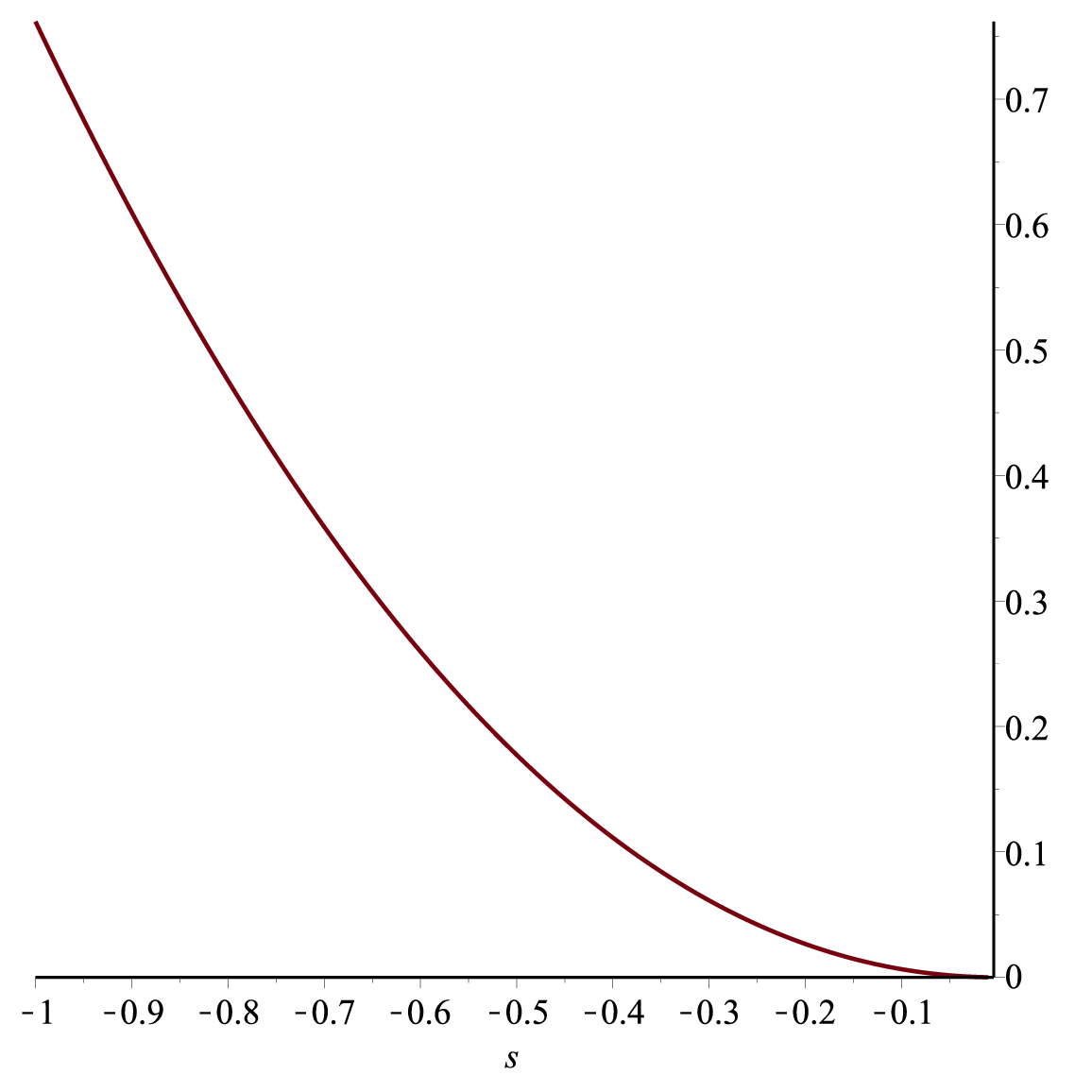}\quad	\includegraphics[width=4cm]{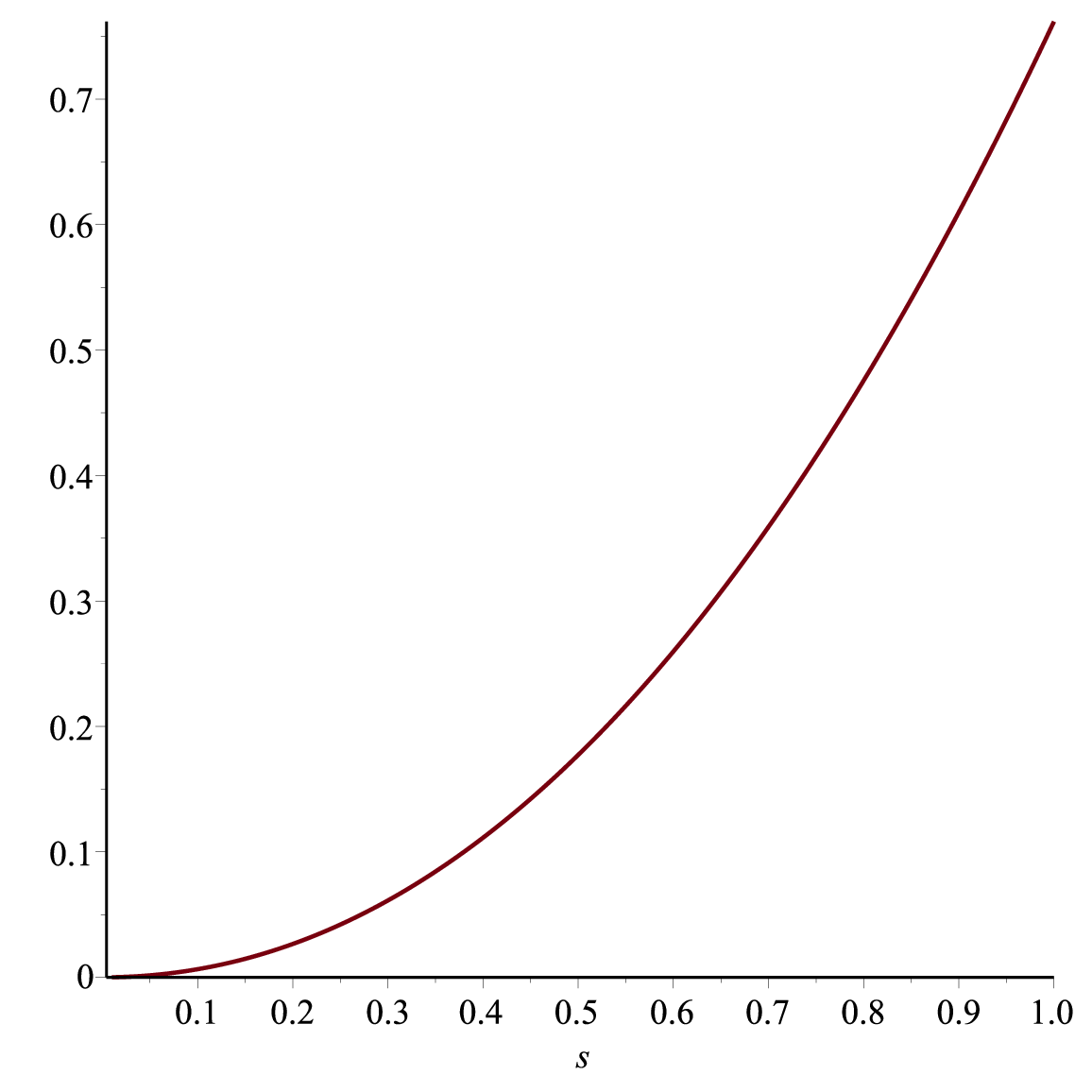}
	 	\caption{Graph of $f$ (left) and $g$(right).}\label{fig:graphs_fg}
	 \end{figure}
	 We can easily see that $f,g>0$ if $s\neq 0$. Thus,  $\rho(M_{T_{1,4,4}}(s)) > (1+|s|)^2$.
\end{proof}

\begin{remark}
	The only cases not known in Corollary~\ref{cor:s adapted lambda} are $|s|<1$, $\Delta(T) = 3$ and $T$ does not contain $T_{1,4,4}$. Since $T$ has a vertex of degree three, it must contain at most $T_{1,1,n}$, $T_{1,2,n}$, $T_{2,2,n}$, etc. That is, only one pendant path could be larger than or equal to 4 and the others are some combination of $P_2$ and $P_3$. In these cases $s$ could be non-adapted to $\lambda \geq \rho(M_{T}(s))$.
\end{remark}

\section{Deformed Laplacian limit points for a simple family}\label{sec: simple family}

In this section, we show that a single family of graphs provide $s$-deformed limit points for all $\lambda > 1$. We summarize the main result of this section, Theorem~\ref{thm:all1}, which we restate it for convenience.

\begin{theorem}\label{thm:all} Let $1 < \lambda \in \mathbb{R}$ be a real number. There exists an $s>0$ such that $\lambda$ is an $s$-deformed limit point. Precisely, $\lambda = \lim_{n \to \infty} \rho(M_{T_{1,n,n}}(s))$, for some $s\in \mathbb{R}$.
\end{theorem}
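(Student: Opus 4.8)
The plan is to fix $\lambda > 1$ and exhibit a single value $s > 0$ for which $\lim_{n \to \infty} \rho(M_{T_{1,n,n}}(s)) = \lambda$, relying on the recurrence machinery of Example~\ref{ex:T1nn} and Theorem~\ref{thm: dyn neg neg Delta posit}. First I would write $\rho_n := \rho(M_{T_{1,n,n}}(s))$ and observe, via Proposition~\ref{prop:properties for trees}~(5), that the sequence $(\rho_n)_n$ is strictly increasing in $n$ (deleting a pendant vertex strictly decreases the spectral radius), and via Lemma~\ref{lem:bound} that it is bounded above for fixed $s$. Hence $L(s) := \lim_{n \to \infty} \rho_n$ exists. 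The strategy is then to understand $L(s)$ through the fixed-point structure of the recurrence $\varphi(t) = 1 + s^2 - \lambda - s^2/t$ and to show that as $s$ ranges over $(0, \sqrt{\lambda}-1)$ the limit $L(s)$ sweeps out all of $(1, +\infty)$, so that the intermediate value theorem pins down the required $s$.

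The key computation is to pass to the limit in the root equation~\eqref{eq:root}. Setting $\lambda = L(s)$ and letting $n \to \infty$, the sequence $Z_{n-1}$ converges to the attracting fixed point $\theta = \tfrac{\alpha}{2} - \tfrac{1}{2}\sqrt{\alpha^2 + 4\gamma}$ of $\varphi$ (this is exactly item~(3) of Theorem~\ref{thm: dyn neg neg Delta posit}, once one checks that the initial condition $Z_1 = 1 - \lambda$ lands in the appropriate basin, which is where the adapted-to-$\lambda$ condition $\lambda > (1+|s|)^2$ enters). Substituting $Z_{n-1} \to \theta$ and $Z_1 = 1-\lambda$ into~\eqref{eq:root} and taking $n\to\infty$ yields a closed limiting equation
\begin{equation*}
 1 + 2s^2 - \lambda - \frac{2s^2}{\theta} - \frac{s^2}{1-\lambda} = 0,
\end{equation*}
with $\alpha = 1 + s^2 - \lambda$ and $\gamma = -s^2$. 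Using $\theta\theta' = -\gamma = s^2$ and $\theta + \theta' = \alpha$ to simplify $s^2/\theta = \theta'$, this reduces to an explicit algebraic relation between $\lambda$ and $s$, which I would then solve (or analyze monotonically) to produce the map $s \mapsto L(s)$.

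The final step is a surjectivity/continuity argument: I would verify that $L(s)$ is continuous on $(0, \sqrt{\lambda_0}-1)$ and examine its boundary behavior, showing $L(s) \to 1$ as $s \to 0^+$ (consistent with $M_G(0) = I$ having spectral radius $1$) while $L(s)$ grows without bound, or more directly matches any prescribed $\lambda$, as $s$ increases. By the intermediate value theorem there is then a value $s \in (0, \sqrt{\lambda}-1)$ realizing the given $\lambda$; distinctness of the $\rho_n$ (needed for the limit-point definition) follows from the strict monotonicity already established. \textbf{The main obstacle} I anticipate is the justification that $Z_{n-1} \to \theta$ uniformly enough to pass to the limit in~\eqref{eq:root} simultaneously with $\lambda$ depending on $n$ through $\rho_n$; one must argue carefully that $\rho_n \to L(s)$ and the recurrence convergence are compatible, i.e.\ that evaluating the recurrence at the limiting $\lambda = L(s)$ legitimately captures the limit of the finite-$n$ root equations. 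Controlling this interchange of limits, and confirming that the adapted condition holds along the whole sequence, is the delicate point; the algebraic simplification and the intermediate value argument are then routine.
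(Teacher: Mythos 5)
Your proposal follows essentially the same route as the paper: existence of $L(s)=\tau_0(s)$ via monotonicity (Proposition~\ref{prop:properties for trees}~(5)) plus boundedness (Lemma~\ref{lem:bound}), passing to the limit in the root equation~\eqref{eq:root} with $Z_{n-1}\to\theta$ and simplifying via $\theta\theta'=s^2$ to get an algebraic relation between $\lambda$ and $s$, and then a continuity/intermediate-value argument showing $s\mapsto\tau_0(s)$ sweeps out $(1,+\infty)$. Even the "main obstacle" you flag (the interchange of limits, since $\lambda_n$ varies with $n$ inside the recurrence) is treated in the paper at the same level of rigor, namely by asserting $Z_{n-1}-\theta_n\to 0$ and invoking continuity of $\theta_n(s)$ in $\lambda_n$.
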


From Proposition~\ref{prop:properties for trees}~(3), we know that $\rho(M_T(s))>1$ for any tree $T$ . Thus, any limit point obtained by trees will be greater than or equal to 1. Our aim is to understand the limit points of the simple trees, the  graphs $T_{1,n,n}$ given in Example~\ref{ex:T1nn}.

We start by proving the following result.

\begin{lemma}
\label{lem:limT1nn} Let $s \in \mathbb{R}$ and $\lambda_n(s)=\rho(M_{T_{1,n,n}}(s))$ for $n\geq 4$. There exists $$\ \lim_{n\rightarrow \infty} \lambda_n(s):=\tau_0(s).$$
    \end{lemma}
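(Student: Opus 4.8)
The plan is to exhibit $(\lambda_n(s))_{n\ge 4}$ as a monotone sequence that is bounded above and then invoke the monotone convergence theorem; note that the lemma only asserts \emph{existence} of the limit, so its value need not be computed here. Fix $s$ throughout. Since trees are bipartite, $\rho(M_{T_{1,n,n}}(s))=\rho(M_{T_{1,n,n}}(-s))$, so I may assume $s\le 0$, in which case $M_{T_{1,n,n}}(s)$ is nonnegative and irreducible and $\lambda_n(s)$ is a genuine Perron root. For monotonicity, observe that $T_{1,n-1,n-1}$ is obtained from $T_{1,n,n}$ by deleting one pendant vertex from the far end of each of the two legs of length $n$. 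Applying Proposition~\ref{prop:properties for trees}(5) twice, once for each deleted pendant vertex, gives
\[
\lambda_{n-1}(s)=\rho(M_{T_{1,n-1,n-1}}(s))<\rho(M_{T_{1,n,n}}(s))=\lambda_n(s),
\]
so $(\lambda_n(s))_{n\ge 4}$ is strictly increasing in $n$.

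Next I would bound the sequence from above uniformly in $n$. The tree $T_{1,n,n}$ has maximum degree $\Delta=3$ for every $n$, whence $\rho(A(T_{1,n,n}))\le \Delta=3$ by the elementary row-sum bound (one can even use the sharper $\rho(A(T))\le 2\sqrt{\Delta-1}=2\sqrt 2$ valid for trees). Substituting into Lemma~\ref{lem:bound} yields
\[
\lambda_n(s)=\rho(M_{T_{1,n,n}}(s))\le 1+s^2(\Delta-1)+|s|\,\rho(A(T_{1,n,n}))\le 1+2s^2+3|s|,
\]
a bound independent of $n$. A strictly increasing sequence bounded above converges, so $\tau_0(s):=\lim_{n\to\infty}\lambda_n(s)$ exists, which proves the lemma.

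I do not expect any genuine obstacle for the existence statement itself: it reduces to a clean monotone-plus-bounded argument assembled from Proposition~\ref{prop:properties for trees}(5) and Lemma~\ref{lem:bound}. The substantive difficulty, which I expect the paper to confront in the following results rather than here, is to identify $\tau_0(s)$ explicitly. For that I would return to the recurrence $Z_j=\varphi(Z_{j-1})$ of Example~\ref{ex:T1nn} with $\alpha=1+s^2-\lambda$ and $\gamma=-s^2$: once $s$ is adapted to the relevant $\lambda$ (so that $\Delta=\alpha^2+4\gamma>0$), Theorem~\ref{thm: dyn neg neg Delta posit} forces $Z_{n-1}\to\theta$, the attracting fixed point of $\varphi$, and passing to the limit in the root equation~\eqref{eq:root} would characterize $\tau_0(s)$ as a solution of $1+2s^2-\lambda-2s^2/\theta-s^2/(1-\lambda)=0$. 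The delicate point there is that the recurrence is evaluated at the moving value $\lambda=\lambda_n(s)$, so justifying the interchange of the limit $n\to\infty$ with the dynamics of $\varphi$ is exactly where care is required.
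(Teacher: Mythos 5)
Your proof is correct and follows essentially the same route as the paper: monotonicity from Proposition~\ref{prop:properties for trees}(5) applied to pendant-vertex deletions, a uniform upper bound from Lemma~\ref{lem:bound} with $\Delta=3$, and the monotone convergence theorem. The paper's proof additionally rehearses the recurrence $Z_j=\varphi(Z_{j-1})$ and its fixed points inside the lemma, but that material is only needed for the subsequent identification of $\tau_0(s)$ (exactly the deferred step you flag), not for the existence statement itself.
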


\begin{proof}
Follows from the discussion in Section~\ref{sec:recurrences} that the application of diagonalize algorithm of Figure~\ref{fig:diagonalize} to $T_{1,n,n}(s)$ with $\lambda_n=\rho(M_{T_{1,n,n}}(s))$ gives rise to the recurrence $(Z_j)_{j \in \mathbb{N}}$ which is the recurrence from Theorem~\ref{thm: dyn neg neg Delta posit} for $\alpha_n:=1+s^2-\lambda_n <0$ and $\gamma=-s^2 <0$. Consequently,  we have two distinct solutions,
\begin{equation}\label{ineq}
	\Delta_n(s):=(1+s^2-\lambda_n)^2-4s^2>0
\end{equation} (here $\Delta_n$ is not the maximum degree, but the quantity defined in \cite{OliveTrevAppDiff}) or, equivalently, $1-\sqrt{\lambda_n}<s<-1+\sqrt{\lambda_n}$, also equivalent to $\lambda_n> (1+|s|)^2$. The solutions, in this case, are given by
\begin{align}
	\theta_n(s) & :=\frac{(1+s^2-\lambda)-\sqrt{(1+s^2-\lambda_n)^2-4s^2}}{2} \quad \text{ and } \\
	\theta'_n(s) & :=\frac{(1+s^2-\lambda)+\sqrt{(1+s^2-\lambda_n)^2-4s^2}}{2}.
\end{align}
Notice that $\theta_n(s)\cdot \theta'_n(s)=s^2$.

It is obvious that $Z_1=1-\lambda_n < \theta_n$. Indeed, we have $1-\lambda_n<\theta_n$ if, and only if,
\[ 2-2\lambda_n < 1+s^2-\lambda_n-\sqrt{(1+s^2-\lambda_n)^2-4s^2} \]
if, and only if,
\[ \sqrt{(1+s^2-\lambda_n)^2-4s^2}<-1+s^2+\lambda_n,\] which happens if and only if $-2\lambda_n s^2<2\lambda_n s^2$, which is true since $\lambda_n>1$. Since $s$ is adapted to $\lambda_n$, we observe that all $Z_j$ are smaller than $\theta_n(s)$ because $Z_j \to \theta_n(s)$, increasingly, when $j \to \infty$ (see Theorem~\ref{thm: dyn neg neg Delta posit}).

The sequence $(\lambda_n)_{n \in \mathbb{N}}$ is strictly increasing because $T_{1,n,n}$  is obtained from  $T_{1,n+1,n+1}$ by deletion of two edges (see Proposition~\ref{prop:properties for trees}).  Also, from Lemma~\ref{lem:bound}, as $T_{1,n,n}$ has  maximum degree $\Delta=3$ (and we can bound $\rho_A(T_{1,n,n})$ by $2\sqrt{\Delta-1}$ from \cite{BoundTree})
\[\lambda_n=\rho(M_{T_{1,n,n}}(s))\leq 1+s^2(\Delta(T_{1,n,n})-1)+|s|\rho(A(T_{1,n,n})) < \infty, \forall n \geq 2.\]
From the classical result of real analysis we obtain that $(\lambda_n)_{n \in \mathbb{N}}$ increasing and bounded, hence a convergent sequence, producing an $s$-deformed Laplacian limit point $\tau_0(s)$.

\end{proof}

In the remaining of this section, we discuss the distribution of $\tau_0(s)$, reasoning the truth of Theorem~\ref{thm:all}.
In order to obtain a formula for the $s$-deformed Laplacian limit points $\tau_{0}(s)$ we recall that  from Equation~\ref{eq:root} we know that $1+2s^2-\lambda_n - 2\frac{s^2}{Z_{n-1}}-\frac{s^2}{Z_1} = 0$  is equivalent to
\begin{equation}\label{eq: value root 3}
	1+2s^2-\lambda_n - 2\frac{s^2}{(Z_{n-1} - \theta_n) + \theta_n}-\frac{s^2}{1-\lambda_n} = 0.
\end{equation}

Using the fact that   $\theta_{n}(s) =\frac{(1+s^2-\lambda_n)-\sqrt{(1+s^2-\lambda_n)^2-4s^2}}{2} $ is continuous in $\tau_{0}(s)$ we obtain
\[\lim_{n\to \infty} \theta_{n}(s) =\frac{(1+s^2-\tau_{0}(s))-\sqrt{(1+s^2-\tau_{0}(s))^2-4s^2}}{2}.\]

Taking the limit in Equation~\ref{eq: value root 3} we obtain,
\begin{equation*}
	1+2s^2-\tau_{0}(s) - 2\frac{s^2}{\lim_{n\to \infty} \theta_{n}(s)}-\frac{s^2}{1-\tau_{0}(s)} = 0,
\end{equation*}
because $Z_{n-1} - \theta_n \to 0$ when $n \to \infty$. As $\theta_n(s)\cdot \theta'_n(s)=s^2$ we obtain the equivalent form
\[
1+2s^2-\tau_{0}(s) - 2\frac{s^2}{\frac{(1+s^2-\tau_{0}(s))-\sqrt{(1+s^2-\tau_{0}(s))^2-4s^2}}{2}}- \frac{s^2}{1-\tau_{0}(s)} = 0
\]
\[
1+2s^2-\tau_{0}(s) - \left( (1+s^2-\tau_{0}(s))+\sqrt{(1+s^2-\tau_{0}(s))^2-4s^2}\right) -\frac{s^2}{1-\tau_{0}(s)} = 0
\]
\[
s^2 -\sqrt{(1+s^2-\tau_{0}(s))^2-4s^2} -\frac{s^2}{1-\tau_{0}(s)} =0
\]
\[
 \sqrt{(1+s^2-\tau_{0}(s))^2-4s^2}=s^2 \frac{\tau_{0}(s)}{\tau_{0}(s)-1}
\]
\[
(1+s^2-\tau_{0}(s))^2-4s^2 =s^4 \left( 1+\frac{1}{\tau_{0}(s)-1}\right) ^2 .
\]
We notice that, for each $s>0$ the function
\begin{equation}\label{eq:h(t)}
 h(t):=(1+s^2-t)^2-4s^2  - s^4 \left( 1+\frac{1}{t-1}\right)^2
\end{equation}
has a vertical asymptote in $t=1$, and for $t>1$ it is differentiable with
\[h'(t)=- 2(1+s^2-t)   - 2 s^4 \left( 1+\frac{1}{t-1}\right) \left( -\frac{1}{(t-1)^2}\right) >0. \]
Since $\lim_{t \to 1^+} h(t) = -\infty$ and $\lim_{t \to  +\infty} h(t) =+\infty$ we have a unique root $t=\tau_{0}(s) \in (1, +\infty)$.

The actual value $\tau_{0}(s)$ can be computed by standard numerical methods. Otherwise, as $\frac{\tau_{0}(s)}{\tau_{0}(s)-1} >1$, from the sequence of equations above,  we can always reduce the root search  to the degree four polynomial
\begin{equation}\label{eq: solve tau0}
p(t)={t}^{4}+ \left( -2\,{s}^{2}-4 \right) {t}^{3}+ \left( 2\,{s}^{2}+6
 \right) {t}^{2}+ \left( -2\,{s}^{4}+2\,{s}^{2}-4 \right) t+{s}^{4}-2
\,{s}^{2}+1,
\end{equation}
whose unique solution in the interval $(1, +\infty)$ is $\tau_0(s)$.

It is worth observing that Equation~\eqref{eq: solve tau0} gives the same result for $s$ or $-s$, what is expected from the fact that $\sigma(M_G(s))=\sigma(M_G(-s))$ for bipartite graphs. Moreover, when $s=1$ or $s=-1$, we retrieve the Laplacian limit point for $T_{1,n,n}$, a result due to Guo \cite{guo2008}.
\begin{corollary}\label{cor:lap} Let $L=M_{T_{1,n,n}}(1)$ be the Laplacian matrix of $T_{1,n,n}$. Let $\lambda= \lim_{n\to \infty} \rho(L)$. Then $$\lambda= \frac{1}{3}\,\sqrt [3]{54+6\,\sqrt {33}}+4\,{\frac {1}{\sqrt [3]{54+6\,\sqrt {33}}}}+2\approx 4.38+$$
\end{corollary}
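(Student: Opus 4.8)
The plan is to observe that the analysis preceding the corollary has already reduced the computation of $\tau_0(s)$ to extracting the unique root in $(1,+\infty)$ of the quartic $p(t)$ of Equation~\eqref{eq: solve tau0}. Since the Laplacian matrix is precisely $M_{T_{1,n,n}}(1)$, it suffices to specialize that quartic at $s=1$ and read off the admissible root. Substituting $s=1$ collapses the constant term to zero and gives $p(t)=t^4-6t^3+8t^2-4t=t\,(t^3-6t^2+8t-4)$; the factor $t$ contributes the extraneous root $t=0\notin(1,+\infty)$, so the limit point must be the appropriate root of the cubic $c(t):=t^3-6t^2+8t-4$.

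First I would confirm that $c$ has a single real root lying in $(1,+\infty)$. The depression $t=u+2$ (using $-(-6)/3=2$) turns $c$ into the reduced cubic $u^3-4u-4=0$, whose discriminant $-4(-4)^3-27(-4)^2=256-432=-176<0$ shows exactly one real root. Since $c(1)=-1<0$ and $c(5)=11>0$, that root lies in $(4,5)$, consistent with the claimed value $\approx 4.38$; uniqueness of the real root then guarantees it is the only root in $(1,+\infty)$, hence equals $\tau_0(1)$.

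Next I would apply Cardano's formula to $u^3-4u-4=0$ with $p=-4$, $q=-4$. Here $q^2/4+p^3/27=4-\tfrac{64}{27}=\tfrac{44}{27}$ and $\sqrt{44/27}=\tfrac{2}{9}\sqrt{33}$, so the real root is $u=\sqrt[3]{2+\tfrac{2}{9}\sqrt{33}}+\sqrt[3]{2-\tfrac{2}{9}\sqrt{33}}$. The remaining step is cosmetic: writing $C:=\sqrt[3]{54+6\sqrt{33}}$, one checks $(C/3)^3=C^3/27=2+\tfrac{2}{9}\sqrt{33}$ and, after rationalizing $64/(54+6\sqrt{33})=(54-6\sqrt{33})/27$, that $(4/C)^3=2-\tfrac{2}{9}\sqrt{33}$. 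Hence $u=C/3+4/C$ and $\lambda=\tau_0(1)=u+2=\tfrac13\sqrt[3]{54+6\sqrt{33}}+\dfrac{4}{\sqrt[3]{54+6\sqrt{33}}}+2$, exactly the stated closed form.

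I do not expect a serious obstacle: the genuinely analytic work (existence of the limit in Lemma~\ref{lem:limT1nn} and the monotonicity argument pinning down a single admissible root of Equation~\eqref{eq: solve tau0}) is already done, so the corollary is essentially a specialization and a Cardano computation. The only mildly delicate point is reconciling the two-radical Cardano expression with the single-radical form in the statement; I would handle this by cubing each summand and matching as above, and conclude with a numerical check confirming $\lambda\approx 4.38$ and its agreement with Guo's Laplacian limit point.
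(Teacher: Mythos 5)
Your proposal is correct and follows exactly the route the paper intends: the corollary is stated as an immediate specialization of Equation~\eqref{eq: solve tau0} at $s=1$, and your computation (factoring out the extraneous root $t=0$, depressing the cubic $t^3-6t^2+8t-4$, applying Cardano, and matching the single-radical form via $(C/3)^3$ and $(4/C)^3$) is a complete and accurate verification of what the paper leaves implicit. No gaps.
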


A table showing the values of $\tau_{0}(s)$ numerically obtained is:
\begin{center}
	\begin{tabular}{ll|ll}
		$s$ &	$\tau_{0}(s)$  &  $s$ &	$\tau_{0}(s)$\\
		\hline
		0.001 & 1.002059342 & 0.5&	 2.341081806\\
		0.01& 1.020698941 & 0.6&	 2.689803637\\
		0.1&	 1.217675873 & 0.7&	 3.067507378\\
		0.2&	 1.459682287 & 0.8&	 3.475060020\\
		0.3&	 1.726955383 & 0.9&	 3.913288615\\
		0.4&	 2.020441181 & 1.0&	 4.382975768\\
		5 &  53.36963067 & 		10 & 203.4647577\\
	\end{tabular}
\end{center}
The negative values of $s$ provide the same output, as expected (see Section~\ref{sec:PF}) and we can see that all the values  $\lambda>1$  are $s$-deformed Laplacian limit points (for different values of $|s|$, evidently).

From Proposition~\ref{prop:properties for trees}, any small ($\lambda<1$) $s$-deformed Laplacian limit point (if there are any) should be obtained from non-tree graphs or from trees without $P_2$ (otherwise $\lambda \geq1$).

Moreover, since the range of $s \mapsto \tau_{0}(s)$ is $(1,+\infty)$ we can expect to find $s$-deformed Laplacian limit points for a fixed $s$ in this interval. This in fact the theme of the next section. For a fixed $s$, we want to characterize which real numbers are $s$-deformed limit points.

\section{Deformed Laplacian limit points for caterpillars}\label{sec: Deformed Laplacian limit points for caterpillars}

In this section, inspired by the original work of  Shearer \cite{shearer1989distribution} (and also \cite{AalphaOlivTrev}), we aim to characterize, for a fixed $s$, big $s$-deformed Laplacian limit points $\lambda$  (that is, $\lambda \in \mathcal{L}(s)$ such that $\lambda \geq 1$), where $s$ is adapted to $\lambda$, that is  $\lambda > (1+|s|)^2$ or equivalently, $1-\sqrt{\lambda}< s < \sqrt{\lambda} -1$. Our choice is a set of trees (which contains $T_{1,n,n}$) named caterpillars: trees such that,  after removing all the leafs, remains only a path called backbone. As we are aiming trees (which are bi-partite graphs) we can just  look at positive values of $s$, that is, $0< s < \sqrt{\lambda} -1$.

A caterpillar denoted by $T_k=[r_1,r_2, \ldots, r_k], \; k \geq 2$ has a path with $k$ vertices $v_i, \; i \in \{1,\ldots k\}$ each one with $r_i$ pendant vertices (see Figure~\ref{fig:caterpil}). The vertices $v_i$ are called {\it back nodes}. The first task is to build a sequence of caterpillars whose deformed Laplacian matrices have spectral radius strictly smaller than a fixed number.
\begin{theorem}\label{thm: condition caterp}
Given $\lambda$ and $s$ real fixed numbers, let ${\displaystyle \delta:=\frac{s^2 \lambda }{\lambda -1}}$. A caterpillar $T_k=[r_1,r_2, \ldots, r_k], \; k \geq 2$ satisfies $\rho(M_{T_{k}}(s))< \lambda$ if, and only if,
	\[
	\begin{cases}
		b_1 :=1 - \lambda  +r_{1} \delta <0,\\
		b_{j} := \varphi(b_{j-1})  +r_{j} \delta <0,\; 2 \leq j \leq k-1,\\
		b_{k} := -s^2 + \varphi(b_{k-1})  +r_{k} \delta <0,
	\end{cases}\]
	 where $\varphi(t):= 1+s^2 - \lambda-  \frac{ s^2}{t}, \; t \neq 0$.
\end{theorem}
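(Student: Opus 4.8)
The plan is to run the Diagonalize algorithm of Figure~\ref{fig:diagonalize} on $M_{T_k}(s)$ with shift $x=-\lambda$ and read off the diagonal via Theorem~\ref{thm:inertia}. With $x=-\lambda$, a positive (resp.\ negative, zero) diagonal output corresponds to an eigenvalue larger than (resp.\ smaller than, equal to) $\lambda$. Since $M_{T_k}(s)$ is symmetric with largest eigenvalue $\rho(M_{T_k}(s))$, the condition $\rho(M_{T_k}(s))<\lambda$ is equivalent to all $n$ eigenvalues being smaller than $\lambda$, i.e.\ to every diagonal entry produced by the algorithm being negative. The whole proof then reduces to computing these diagonal entries for the caterpillar and identifying them with the sequence $(b_j)$.

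First I would fix the bottom-up ordering that roots the caterpillar at the back node $v_k$ and processes first all pendant vertices, then $v_1,v_2,\ldots,v_k$ along the backbone. Each pendant vertex is a leaf, so the algorithm leaves its initialized value $1-\lambda$ untouched; since $\lambda>1$ this is strictly negative, and in particular nonzero. Processing $v_1$ (degree $r_1+1$) subtracts the contribution $r_1\frac{s^2}{1-\lambda}$ of its $r_1$ leaf-children from its initialized value $1+s^2 r_1-\lambda$, and the identity $s^2 r_1\left(1-\frac{1}{1-\lambda}\right)=r_1\frac{s^2\lambda}{\lambda-1}=r_1\delta$ collapses this exactly to $b_1$. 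For an interior back node $v_j$ ($2\le j\le k-1$, degree $r_j+2$) the same leaf bookkeeping produces the summand $r_j\delta$, while the single backbone child $v_{j-1}$ contributes $-\frac{s^2}{b_{j-1}}$, which together with the constant part $1+s^2-\lambda$ of the initialized diagonal assembles into $\varphi(b_{j-1})+r_j\delta=b_j$. The root $v_k$ has only one backbone neighbour, hence degree $r_k+1$ rather than $r_k+2$; this missing $s^2$ on the diagonal is precisely the extra $-s^2$ appearing in the formula for $b_k$.

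Putting this together, the diagonal output consists of $\sum_i r_i$ copies of $1-\lambda<0$ coming from the leaves, together with $b_1,\ldots,b_k$ coming from the back nodes. Since the leaf entries are automatically negative, $\rho(M_{T_k}(s))<\lambda$ holds if and only if $b_1<0,\ldots,b_k<0$, which is the stated system. The step I expect to require the most care is the degenerate branch of Diagonalize, triggered when some child has value $0$: there the recurrence $b_j=\varphi(b_{j-1})$ would involve a division by zero and the clean formula would fail. The resolution is that strict negativity propagates along the backbone. In the direction ($\Leftarrow$) the hypotheses $b_{j-1}<0$ guarantee $b_{j-1}\neq 0$, so at every back node the non-degenerate branch is selected and the computed values are exactly $b_1,\ldots,b_k$. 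In the direction ($\Rightarrow$), if $\rho(M_{T_k}(s))<\lambda$ then $\lambda$ is not an eigenvalue, so no diagonal output vanishes; inducting along $v_1,\ldots,v_k$, each produced value is strictly negative and hence nonzero, so again the non-degenerate branch is used throughout and the outputs coincide with the $b_j$. This simultaneously shows that all $b_j$ are well defined and closes both implications.
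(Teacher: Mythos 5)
Your proposal is correct and follows essentially the same route as the paper: run $\mbox{Diagonalize}(M_{T_k}(s),-\lambda)$ with the backbone processed last, invoke Theorem~\ref{thm:inertia} to translate $\rho(M_{T_k}(s))<\lambda$ into all diagonal outputs being negative, and verify that the leaf outputs are $1-\lambda$ while the back-node outputs collapse to exactly $b_1,\ldots,b_k$ via the identity $s^2r_j\bigl(1-\frac{1}{1-\lambda}\bigr)=r_j\delta$. Your explicit handling of the degenerate zero-child branch in both directions is a point the paper leaves implicit, and it is a sound addition rather than a deviation.
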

\begin{proof}
Consider the execution of $\mbox{Diagonalize}(M_{T_{k}}(s), - \lambda)$, where the caterpillar $T_k:=T_k(\lambda)=[r_1,r_2, \ldots, r_k], \; k \geq 2$. In Figure~\ref{fig:caterpil} we can visualize the initial values at each vertex.
	\begin{figure}[H]
		\centering
		\includegraphics[width=0.5\linewidth]{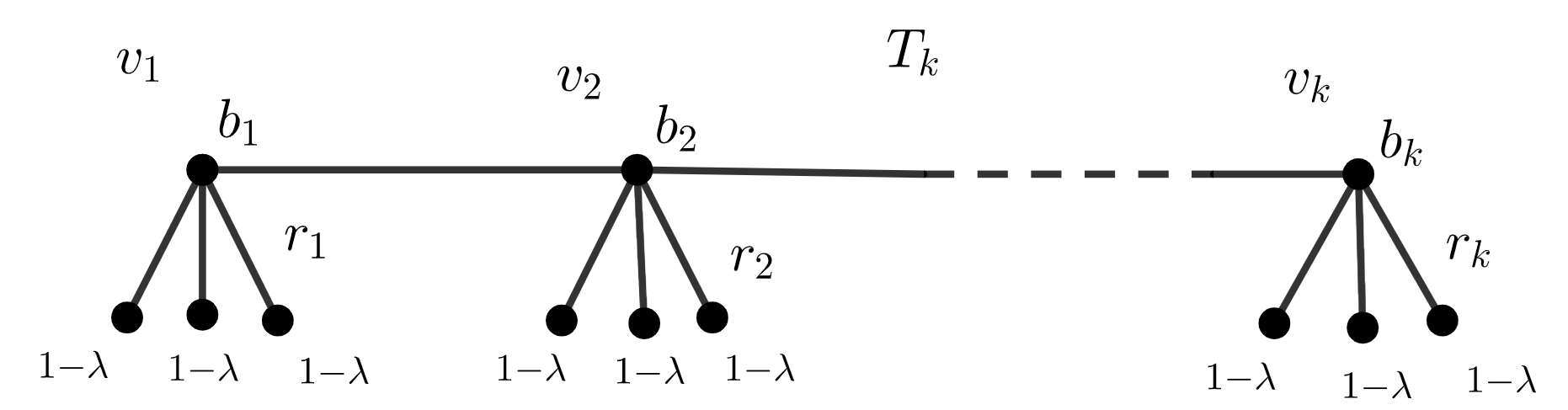}
		\caption{Caterpillar $T_k=[r_1,r_2, \ldots, r_k], \; k \geq 2$.}
		\label{fig:caterpil}
	\end{figure}
	We aim to obtain $\rho(M_{T_{k}}(s))< \lambda$, thus from Theorem~\ref{thm:inertia}, each output must be negative.	 The vertex $v_1$ is initialized with $1+(r_{1}+1 -1)s^2 - \lambda=1+r_{1} s^2 - \lambda $, the next ones are initialized with	 $1+(r_{j}+2 -1)s^2 - \lambda = 1+(r_{j}+1 )s^2 - \lambda $ and the last one, $v_k$ is $1+(r_{k}+1 -1)s^2 - \lambda=1+r_{k} s^2 - \lambda $. At each leaf, we start with the value $1+(1 -1)s^2 - \lambda=1 - \lambda <0 $ because we have a tree different of $K_1$.
	
Now we compute the remaining outputs according to the algorithm.
	 \[b_1:= 1+r_{1} s^2 - \lambda - \frac{r_{1} s^2}{1 - \lambda} = 1 - \lambda  +r_{1}s^2 \frac{\lambda }{\lambda -1}.\]
	 \[b_{j}:= 1+(r_{j}+1 )s^2 - \lambda -  \frac{ s^2}{b_{j-1}} - \frac{r_{j} s^2}{1 - \lambda} = 1+s^2 - \lambda-  \frac{ s^2}{b_{j-1}}   +r_{j}s^2 \frac{\lambda }{\lambda -1},\; 2 \leq j \leq k-1.\]
	 \[b_{k}:= 1+r_{k} s^2 - \lambda -  \frac{ s^2}{b_{k-1}} - \frac{r_{k} s^2}{1 - \lambda} = 1 - \lambda-  \frac{ s^2}{b_{k-1}}   +r_{k} s^2 \frac{\lambda }{\lambda -1}.\]
	  Introducing the notation ${\displaystyle \delta:=\frac{s^2 \lambda }{\lambda -1}}$  and recalling that
	 $\varphi(t):= 1+s^2 - \lambda-  \frac{ s^2}{t}, \; t \neq 0$, we obtain the desired formulas.
	
\end{proof}

We recall once more here that $\varphi(t)= 1+s^2 - \lambda-  \frac{ s^2}{t}, \; t \neq 0$  has two fixed points (see Theorem~\ref{thm: dyn neg neg Delta posit} and discussion in Section~\ref{sec:recurrences})
\begin{align}
	\theta(s) & :=\frac{(1+s^2-\lambda)-\sqrt{(1+s^2-\lambda)^2-4s^2}}{2} \quad \text{ and } \\
	\theta'(s) & :=\frac{(1+s^2-\lambda)+\sqrt{(1+s^2-\lambda)^2-4s^2}}{2},
\end{align}
because  we assumed that $s$ is adapted to $\lambda$, or equivalently, $\lambda> (1+|s|)^2$.

\subsection{Shearer's sequences}
Given a fixed value $\lambda>1$, we aim to find a sequence of caterpillars $(T_k)_{k \in \mathbb{N}}$, where $T_k:=T_k(\lambda)$  such that $\lim_{k\to \infty} \rho(M_{T_{k}}(s))= \lambda$, where $s$ is adapted to $\lambda$. By our choice, using the asymptotic behavior of the recurrence defined by $\varphi$ (see \cite{OliveTrevAppDiff}  or Theorem~\ref{thm: dyn neg neg Delta posit} for details), we shall select caterpillars according to Shearer's approach. We notice that one must require $b_{j}<\theta'(s)$ (it is not enough to choose $b_{j}<0$) in each step because  the right side of $\theta'(s)$ is a repellent point. Otherwise, at some moment, $b_{m}>0$ (see Theorem~\ref{thm: dyn neg neg Delta posit}). Moreover, in order to obtain the best approximation as possible we always choose the maximum value of $r_k$ satisfying this condition. The approach may be summarized as follows:
\begin{definition}\label{def:Shearer sequence}
	  The Shearer sequence $(T_k)_{k \in \mathbb{N}}$ is obtained by
	  \[
	  \begin{cases}
	  	r_{1} : =\big\lfloor \delta^{-1} \left( \theta'(s) +\lambda -1 \right) \big\rfloor\\
	  	r_{j} : =\big\lfloor \delta^{-1} \left( \theta'(s) - \varphi(b_{j-1}) \right) \big\rfloor,\; 2 \leq j \leq k-1\\
	  	r_{k} : =\big\lfloor \delta^{-1} \left( \theta'(s) - \varphi(b_{k-1}) +s^2\right)  \big\rfloor.\\
	  \end{cases}\]
\end{definition}

We remark that, as we choose the maximum value in Definition~\ref{def:Shearer sequence} the following equation holds
\begin{equation}\label{eq:maximal shearer}
	  \theta'(s) - \delta < b_j < \theta'(s), \; \forall j.
\end{equation}

\begin{theorem}\label{thm: shearer good definition}
	The Shearer's sequence $(T_k)_{k \in \mathbb{N}}$ in Definition~\ref{def:Shearer sequence} is well-defined, that is, $r_j \geq 0$. Moreover $\rho(M_{T_{k}}(s))< \lambda$.
\end{theorem}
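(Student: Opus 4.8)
The plan is to prove both claims simultaneously by induction on the back-node index $j$, establishing along the way the two-sided bound \eqref{eq:maximal shearer}, namely $\theta'(s)-\delta < b_j \le \theta'(s)$ for every $j$. Once this bound is available, the conclusion $\rho(M_{T_k}(s))<\lambda$ is immediate: both fixed points of $\varphi$ are negative, since $\theta(s)\theta'(s)=s^2>0$ and $\theta(s)+\theta'(s)=1+s^2-\lambda<0$ (because $s$ is adapted to $\lambda$, so $\lambda>(1+|s|)^2>1+s^2$). Hence $\theta'(s)<0$, the upper bound forces every $b_j\le\theta'(s)<0$, and the sufficiency direction of Theorem~\ref{thm: condition caterp} yields $\rho(M_{T_k}(s))<\lambda$.

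Before the induction I would collect the elementary facts the argument rests on. First, $\delta=\frac{s^2\lambda}{\lambda-1}>0$ for $s\neq 0$ and $\lambda>1$, so dividing by $\delta$ preserves inequalities and checking $r_j=\lfloor\cdot\rfloor\ge 0$ amounts to checking that the argument of each floor is nonnegative. Second, $\varphi$ is strictly increasing on $(-\infty,0)$, since $\varphi'(t)=s^2/t^2>0$, and $\theta'(s)$ is a fixed point, $\varphi(\theta'(s))=\theta'(s)$. Third, $\theta'(s)>1-\lambda$: this reduces to showing $\sqrt{(1+s^2-\lambda)^2-4s^2}>1-\lambda-s^2$, whose right-hand side is negative because $\lambda>1$, so the inequality holds automatically. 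This is the same comparison already used for $Z_1=1-\lambda$ in the proof of Lemma~\ref{lem:limT1nn}.

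For the base case $j=1$, since $r_1=\big\lfloor\delta^{-1}(\theta'(s)+\lambda-1)\big\rfloor$ and $\theta'(s)+\lambda-1>0$ by the third fact, we get $r_1\ge 0$. The defining floor inequalities $\theta'(s)+\lambda-1-\delta<r_1\delta\le\theta'(s)+\lambda-1$ translate directly into $\theta'(s)-\delta<b_1\le\theta'(s)$ upon adding $1-\lambda$. For the inductive step with $2\le j\le k-1$, I assume $b_{j-1}\le\theta'(s)<0$. Monotonicity of $\varphi$ on the negative axis together with the fixed-point property gives $\varphi(b_{j-1})\le\varphi(\theta'(s))=\theta'(s)$, hence $\theta'(s)-\varphi(b_{j-1})\ge 0$ and $r_j\ge 0$; the floor bounds on $r_j\delta$ again give $\theta'(s)-\delta<b_j\le\theta'(s)$, closing the induction. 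The final node is handled by the same mechanism: $\theta'(s)-\varphi(b_{k-1})+s^2\ge 0$ since the first difference is already nonnegative and $s^2>0$, so $r_k\ge 0$, and the floor estimate gives $b_k=-s^2+\varphi(b_{k-1})+r_k\delta\le\theta'(s)$.

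The step I expect to require the most care, rather than a genuine obstacle, is keeping the iterates on the correct branch: the argument hinges on $b_{j-1}<0$ so that $\varphi$ is increasing there and $\varphi(b_{j-1})\le\theta'(s)$, and this is exactly what the induction hypothesis $b_{j-1}\le\theta'(s)<0$ guarantees, so the loop never produces a zero or positive value forcing $r_j<0$. I would also record the harmless edge case in which some $\delta^{-1}(\cdots)$ is an integer, giving $b_j=\theta'(s)$; since $\theta'(s)<0$ this still yields $b_j<0$, so the strict inequalities required by Theorem~\ref{thm: condition caterp} are preserved.
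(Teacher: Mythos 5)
Your proof is correct and follows essentially the same route as the paper's: induction on the back-node index, using $\delta>0$, the monotonicity of $\varphi$ on the negative axis, and the fixed-point identity $\varphi(\theta'(s))=\theta'(s)$ to keep every $b_j\le\theta'(s)<0$ and every $r_j\ge 0$, then concluding $\rho(M_{T_k}(s))<\lambda$ via Theorem~\ref{thm: condition caterp}. Your explicit handling of the floor-equality edge case $b_j=\theta'(s)$ is a minor sharpening of the paper's strict bound in Equation~\eqref{eq:maximal shearer}, but it changes nothing essential in the argument.
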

\begin{proof}
	 The proof is similar to \cite{AalphaOlivTrev}, using induction on $j$.  We only need to ensure that it is possible to continue the process of choosing $r_k$ from $r_{k-1}$ once we have chosen $r_1$. Recall that the main constraint is
	 \[
	 \begin{cases}
	 	b_1 =1 - \lambda  + r_{1} \delta <\theta'(s)\\
	 	b_{j}= \varphi(b_{j-1})  +r_{j} \delta <\theta'(s),\; 2 \leq j \leq k-1\\
	 	b_{k}= -s^2 + \varphi(b_{k-1})  +r_{k} \delta <\theta'(s).
	 \end{cases}\]
	 In the worst case, we can consider  $r_{1} =0$ in such way that $1 - \lambda  + r_{1} \delta <\theta'(s)$ because of the equivalences
	 \[1 - \lambda  + 0 \; \delta <\theta'(s)\]
	 \[1 - \lambda  <\frac{(1+s^2-\lambda)+\sqrt{(1+s^2-\lambda)^2-4s^2}}{2} \]
	 \[2(1 - \lambda)  <(1+s^2-\lambda)+\sqrt{(1+s^2-\lambda)^2-4s^2} \]
	 \[(1 - \lambda)  <s^2+\sqrt{(1+s^2-\lambda)^2-4s^2}, \]
being the last one always true for $\lambda>1$. Therefore, we can choose  $r_{1}\geq 0$, the largest number satisfying $b_1 =1 - \lambda  + r_{1} \delta <\theta'(s)$.
	
	 Supposing that we have chosen  $r_{j-1}$ and produced 	 $b_{j-1}$ we must be able to choose $r_{j} \geq 0$ such that $b_{j}= \varphi(b_{j-1})  +r_{j} \delta <\theta'(s)$. This follows from the properties of the recurrence $\varphi$ because $\varphi(b_{j-1})$ will be closer to $\theta(s)$  provided that $b_{j-1}< \theta'(s)$ (see \cite{OliveTrevAppDiff}  and Theorem~\ref{thm: dyn neg neg Delta posit} for details). Therefore, we can choose  $r_{j}\geq 0$, the largest number satisfying $b_{j}= \varphi(b_{j-1})  +r_{j} \delta <\theta'(s)$ (which will be at least zero because $\varphi(b_{j-1}) < \theta'(s)$).
	
	 Analogously,  in the last step $b_{k}= -s^2 + \varphi(b_{k-1})  +r_{k} \delta <\theta'(s)$ always admits a choice $r_{k} \geq 0$ because
	 $b_{k-1}< \theta'(s)$ implies $\varphi(b_{k-1})   <\varphi(\theta'(s))=\theta'(s) $  and $-s^2 + \varphi(b_{k-1}) < \varphi(b_{k-1})   <\theta'(s)$  thus $-s^2 + \varphi(b_{k-1})  +0\, \delta <\theta'(s)$.	 This proves our claim since we can continue this process indefinitely.
	
	 Finally, $\rho(M_{T_{k}}(s))< \lambda$ by construction and Theorem~\ref{thm: condition caterp}, since $b_{j}< \theta'(s)$ implies  $b_{j}< 0$.
\end{proof}

A fundamental step towards to the existence of limit points is the monotonicity and boundness of the sequence $(\rho(M_{T_{k}}(s)))_{k \in \mathbb{N}}$. Boundness is guaranteed by construction. Next, we prove monotonicity.

\begin{theorem}\label{thm: shearer increasing}
Shearer's sequence $(T_k)_{k \in \mathbb{N}}$ satisfies $\rho(M_{T_{k}}(s))<  \rho(M_{T_{k+1}}(s)) < \lambda$. In particular, there exists a unique number
	\[\hat \lambda= \lim_{k \to \infty}  \rho(M_{T_{k}}(s)) \leq  \lambda.\]
\end{theorem}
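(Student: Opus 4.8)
The plan is to settle boundedness for free and concentrate all the work on strict monotonicity, after which the conclusion is immediate from the Monotone Convergence Theorem. Write $\mu_k := \rho(M_{T_k}(s))$. Theorem~\ref{thm: shearer good definition} already gives $\mu_k < \lambda$ for every $k$, so the sequence is bounded above by $\lambda$; the entire difficulty lies in showing $\mu_k < \mu_{k+1}$. To obtain this I would run $\mbox{Diagonalize}(M_{T_{k+1}}(s), -\mu_k)$ and prove that it produces a positive diagonal entry, which by Theorem~\ref{thm:inertia} forces $M_{T_{k+1}}(s)$ to possess an eigenvalue exceeding $\mu_k$, that is $\mu_{k+1} > \mu_k$.

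The first step is structural. Since $r_1$ is given by the same formula in both caterpillars and each interior $r_j$ ($2 \le j \le k-1$) is produced by the same recurrence from $b_{j-1}$, the Shearer data $r_1,\dots,r_{k-1}$ in Definition~\ref{def:Shearer sequence} coincide for $T_k$ and $T_{k+1}$; the two trees differ only in the treatment of the $k$-th back node (the last-node formula for $T_k$ versus the interior formula for $T_{k+1}$) and in the extra back node that $T_{k+1}$ carries. Consequently, running the algorithm on both trees at the common test value $-\mu_k$ yields identical outputs $\widetilde b_1,\dots,\widetilde b_{k-1}$ at the first $k-1$ back nodes, the pendant leaves all contributing the fixed value $1-\mu_k<0$. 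These outputs obey the recurrence of Theorem~\ref{thm: condition caterp} with $\lambda$ replaced by $\mu_k$; write $\psi(t)=1+s^2-\mu_k-\tfrac{s^2}{t}$ and $\delta_\mu=\tfrac{s^2\mu_k}{\mu_k-1}>0$.

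Next I would use that $\mu_k=\rho(M_{T_k}(s))$ is a simple eigenvalue (Section~\ref{sec:PF}): the run on $T_k$ at $-\mu_k$ has exactly one zero entry, located at the root $v_k$, with all others negative. The last-node relation $0=-s^2+\psi(\widetilde b_{k-1})+r_k^{(k)}\delta_\mu$ then pins down $\psi(\widetilde b_{k-1})=s^2-r_k^{(k)}\delta_\mu$, so the $k$-th output for $T_{k+1}$ is $\widetilde b_k=\psi(\widetilde b_{k-1})+r_k^{(k+1)}\delta_\mu=s^2+\bigl(r_k^{(k+1)}-r_k^{(k)}\bigr)\delta_\mu$, where $r_k^{(k)}$ and $r_k^{(k+1)}$ denote the $k$-th pendant counts of $T_k$ and $T_{k+1}$. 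The arithmetic heart of the argument is the identity $\delta^{-1}s^2=1-\tfrac1\lambda\in(0,1)$, which, on comparing the two floor expressions defining $r_k^{(k)}$ (last-node, with the extra $+s^2$) and $r_k^{(k+1)}$ (interior), forces $r_k^{(k)}-r_k^{(k+1)}\in\{0,1\}$. If this difference is $0$, then $\widetilde b_k=s^2>0$ and a positive entry already appears. If it is $1$, then $\widetilde b_k=-\tfrac{s^2}{\mu_k-1}<0$, and a one-line computation gives $\psi(\widetilde b_k)=s^2$, so the root output of $T_{k+1}$ equals $-s^2+\psi(\widetilde b_k)+r_{k+1}\delta_\mu=r_{k+1}\delta_\mu\ge 0$, which is strictly positive as soon as $r_{k+1}\ge 1$.

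This gives $\mu_{k+1}>\mu_k$ in every case except the degenerate one in which the difference is $1$ and $r_{k+1}=0$, and I expect this to be the main obstacle. In that situation the extra back node is a bare leaf attached to $v_k$, so that $T_{k+1}\cong T_k$ and the two spectral radii genuinely coincide. The natural resolution, which I would argue leaves the conclusion intact, is that equality can arise only through such an isomorphism; hence one should either verify that the greedy maximality of Definition~\ref{def:Shearer sequence} excludes this configuration, or simply pass to the subsequence of pairwise non-isomorphic caterpillars, on which the spectral radii are strictly increasing and pairwise distinct (as required by Definition~\ref{def:limit point}). Granting strict monotonicity, $(\mu_k)$ is increasing and bounded above by $\lambda$, so by the Monotone Convergence Theorem it converges to a unique value $\widehat\lambda=\lim_{k\to\infty}\mu_k\le\lambda$, which is the assertion.
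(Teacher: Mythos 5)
Your core computations are correct, but your route is genuinely different from the paper's. The paper argues structurally: after noting that the first $k-1$ pendant counts of $T_k$ and $T_{k+1}$ coincide, it uses the two-sided bounds of Equation~\eqref{eq:maximal shearer} to deduce $0\le r_k-\tilde r_k\le 1$, and then claims that in either case $T_k$ embeds as a \emph{proper} subgraph of $T_{k+1}$ (when $\tilde r_k=r_k-1$, by letting $v_{k+1}$ play the role of the missing pendant vertex of $v_k$), so strict monotonicity follows from Proposition~\ref{prop:properties for trees}(5). You instead run $\mbox{Diagonalize}(M_{T_{k+1}}(s),-\mu_k)$ and invoke Theorem~\ref{thm:inertia}. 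You arrive at the same pivotal integer fact $r_k^{(k)}-r_k^{(k+1)}\in\{0,1\}$, and your derivation of it --- directly from the two floor formulas via $\delta^{-1}s^2=1-1/\lambda\in(0,1)$ --- is cleaner than the paper's chain of inequalities. Your method costs more computation but buys more information: the explicit outputs ($\widetilde b_k=s^2>0$ in one case; $\widetilde b_k=-s^2/(\mu_k-1)$ with root output $r_{k+1}\delta_\mu$ in the other) show exactly when the inequality is strict and when it collapses to equality. One step you should write out fully: why the unique zero of the $T_k$-run at $-\mu_k$ sits at the root. It is true, but needs the observation that a zero at a non-root vertex would trigger the else-branch of the algorithm and output an entry $+2$, giving (by Theorem~\ref{thm:inertia}) an eigenvalue exceeding $\mu_k=\rho(M_{T_k}(s))$, which is absurd; simplicity of $\mu_k$ then leaves exactly one zero, necessarily at the root.

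The degenerate case you isolate --- $r_k^{(k)}-r_k^{(k+1)}=1$ together with $r_{k+1}=0$, whence $T_{k+1}\cong T_k$ and $\mu_{k+1}=\mu_k$ --- is a genuine obstruction, and you are right to flag it: the paper's own proof has a gap at exactly this point, since the assertion that $T_k$ is a proper subgraph of $T_{k+1}$ ``independently of the value of $\tilde r_k$'' silently requires $r_{k+1}\ge 1$ when $\tilde r_k=r_k-1$, and this is never verified. However, your two remedies are left unexecuted; the second one (passing to a subsequence of non-isomorphic caterpillars) proves less than the theorem states and still needs an argument that the graph sequence does not eventually stabilize. Your first remedy can in fact be carried out in the regime where the theorem is later applied. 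Writing $x=\delta^{-1}(\theta'(s)-\varphi(b_{k-1}))$ and $\{x\}$ for its fractional part, degeneracy requires both $\{x\}\ge 1/\lambda$ (so the floors differ) and $\theta'(s)-\varphi(\tilde b_k)<\delta/\lambda$ (so $r_{k+1}=0$), while $\theta'(s)-\tilde b_k=\delta\{x\}\ge\delta/\lambda$. Since $\varphi(t)<t$ on $(\theta,\theta')$ and $\varphi(t)<\theta$ for $t<\theta$ (Theorem~\ref{thm: dyn neg neg Delta posit}), these conditions force $\tilde b_k\le\theta$ and $\theta'-\theta<\delta/\lambda$. But if $0<s<s^*(\lambda)$, Claim 1 of Theorem~\ref{thm:suf criterion} gives $b_j>-s$ for all outputs, and $\theta<-s<\theta'$ (from $\theta\theta'=s^2$ with $\theta<\theta'<0$), so $\tilde b_k>\theta$ and degeneracy is impossible: strict monotonicity holds for all $s\in(0,s^*(\lambda))$, the only range used in Section~\ref{sec: Approximation}. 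For adapted $s$ close to $\sqrt{\lambda}-1$ the question remains open in your proof and in the paper's alike. In any event, your case analysis does give $\mu_k\le\mu_{k+1}<\lambda$ unconditionally, so the ``in particular'' conclusion --- existence of $\hat\lambda=\lim_{k\to\infty}\mu_k\le\lambda$ --- stands.
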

\begin{proof}
	 	 Consider
	 \[
	 \begin{cases}
	 	r_{1} : =\big\lfloor \delta^{-1} \left( \theta'(s) +\lambda -1 \right) \big\rfloor\\
	 	r_{j} : =\big\lfloor \delta^{-1} \left( \theta'(s) - \varphi(b_{j-1}) \right) \big\rfloor,\; 2 \leq j \leq k-1\\
	 	r_{k} : =\big\lfloor \delta^{-1} \left( \theta'(s) - \varphi(b_{k-1}) +s^2\right)  \big\rfloor,\\
	 \end{cases}\]
	 the sequence of degrees associated to $T_k$ and
	 \[
	 \begin{cases}
	 	\tilde r_{1} : =\big\lfloor \delta^{-1} \left( \theta'(s) +\lambda -1 \right) \big\rfloor\\
	 	\tilde r_{j} : =\big\lfloor \delta^{-1} \left( \theta'(s) - \varphi(\tilde b_{j-1}) \right) \big\rfloor,\; 2 \leq j \leq k\\
	 	\tilde  r_{k+1} : =\big\lfloor \delta^{-1} \left( \theta'(s) - \varphi(\tilde  b_{k}) +s^2\right)  \big\rfloor,\\
	 \end{cases}\]
	 the sequence of degrees associated to $T_{k+1}$. It is evident that $\tilde r_{1}= r_{1}$ and $\tilde r_{j}= r_{j},\; 2 \leq j \leq k-1$. Let us examine $\tilde r_{k}$ and $\tilde r_{k+1}$.
	
	 From Equation~\ref{eq:maximal shearer} we obtain
	 \[\theta'(s) - \delta < b_j= \varphi(b_{j-1})  +r_{j} \delta < \theta'(s), \; \forall j.\]
	 Thus,
	 \[\theta'(s) - \delta < -s^2 + \varphi(b_{k-1})  +r_{k} \delta < \theta'(s) \text{ and } \theta'(s) - \delta < \varphi(\tilde b_{k-1})  +\tilde r_{k} \delta < \theta'(s).\]
	 Since  $b_{k-1} = \tilde b_{k-1}$ we obtain the equivalent form by multiplying the second one by $-1$:
	 \[\frac{\theta'(s) - \delta+s^2 -\varphi(b_{k-1}) }{\delta} <  r_{k}  < \frac{\theta'(s) +s^2- \varphi(b_{k-1}) }{\delta} \] and \[
	 \frac{-\theta'(s) +\delta +\varphi(b_{k-1})}{\delta }  > - \tilde r_{k}  > \frac{\varphi(b_{k-1})  -\theta'(s)}{\delta }.\]
	 Adding these two inequalities we obtain,
	 \[-1< \frac{-\delta +s^2}{\delta}< r_{k} -\tilde r_{k} <\frac{\delta +s^2}{\delta}<2\]
	 \[0\leq  r_{k} -\tilde r_{k}  \leq 1\]
	 thus, $\tilde r_{k} =r_{k}$ or $\tilde r_{k} =r_{k} -1$.
	
	 If $\tilde r_{k} =r_{k}$, then $T_k$ is a subgraph of $T_{k+1}$. If $\tilde r_{k} =r_{k} -1$, then we can consider the path joining $v_k$ to $v_{k+1}$ to create a copy of $T_{k}$ inside  $T_{k+1}$. So, independently of the value of $\tilde r_{k}$, we know that $T_{k}$ is a proper subgraph of $T_{k+1}$ and by Proposition~\ref{prop:properties for trees}  we obtain that $(T_k)_{k \in \mathbb{N}}$ satisfy $\rho(M_{T_{k}}(s))<  \rho(M_{T_{k+1}}(s)) $.

From Theorem~\ref{thm: shearer increasing} we conclude that the limit of spectral radii for a Shearer's sequence $(T_k)_{k \in \mathbb{N}}$ is an $s$-deformed Laplacian limit point $\hat \lambda$ smaller than or equal to $\lambda$.

\end{proof}

In the next section we will try to establish when $\hat \lambda = \lambda$.

\section{Approximation}\label{sec: Approximation}

Proving that the Shearer's sequence of caterpillars obtained in the previous section always produces an $s$-deformed Laplacian limit point is hard, mainly because the sequences have intricate expressions and proving its convergence would be technically difficult.
In this section we will find an approximate criterion and characterize an interval of $s$-deformed Laplacian limit points for every $s \in (0,1)$.

\subsection{Approximation Criterion}\label{sec: Criterion of approximation}

\begin{theorem}\label{thm: main criterion}
	Let $(T_k)_{k \in \mathbb{N}}$ be the Shearer's sequence for $\lambda>1$ and $\varepsilon_1,\ldots,\varepsilon_k$ be the smallest positive root of $b_j(\varepsilon) $, the output of $Diagonalize(M_{T_{k}}(s), -(\lambda-\varepsilon))$ at each backnode of $T_k$. Then, $\hat \lambda= \lim_{k \to \infty}  \rho(M_{T_{k}}(s))= \lambda$ if, and only if, $\varepsilon_k \to 0$.  In particular, if  the sequence $(\beta_j)_{j \in \mathbb{N}}$, given by \eqref{eq:beta0} is unbounded, then $\varepsilon_k \to 0$ ( so that $\hat \lambda= \lambda$).
	\begin{equation}\label{eq:beta0}
		\begin{cases}
		\beta_{1}= \frac{1+ r_1 s^2 \frac{ 1 }{(\lambda -1)^2}}{ \lambda -1  - r_{1} \delta },\\
		\beta_{j} = c_{j} + \gamma_{j}\beta_{j-1},
	\end{cases}
	\end{equation}
with $c_j:= \frac{1+ r_{j} s^2 \frac{ 1 }{(\lambda -1)^2} }{  -b_{j}  }>0,\; j\geq 2$ and $\gamma_j:=\frac{ s^2}{b_{j-1} b_{j} }>0,\; j\geq 2 $ 
\end{theorem}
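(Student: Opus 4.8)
The plan is to prove the equivalence by identifying $\varepsilon_k$ with the spectral gap $\lambda - \rho(M_{T_k}(s))$, and then to treat the sufficient condition by reading each $\beta_j$ as a normalized $\varepsilon$-derivative of the pivots $b_j(\varepsilon)$.

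First I would show that $\varepsilon_k = \lambda - \rho(M_{T_k}(s))$. Fix $k$ and run $\mbox{Diagonalize}(M_{T_k}(s), -(\lambda-\varepsilon))$. By Theorem~\ref{thm:inertia}, the number of positive diagonal outputs equals the number of eigenvalues exceeding $\lambda-\varepsilon$. For $0<\varepsilon<\lambda-\rho(M_{T_k}(s))$ we have $\lambda-\varepsilon>\rho(M_{T_k}(s))$, so there are no such eigenvalues and no zero pivots; hence the plain recurrence of Theorem~\ref{thm: condition caterp} applies and every $b_j(\varepsilon)<0$, in particular $b_k(\varepsilon)<0$. At $\varepsilon=\lambda-\rho(M_{T_k}(s))$ exactly one pivot vanishes (the top eigenvalue is simple for trees) and none is positive. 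The leaf pivots $1-\lambda+\varepsilon$ vanish only at $\varepsilon=\lambda-1>\lambda-\rho(M_{T_k}(s))$ since $\rho>1$, so the zero is not at a leaf; and a zero at an interior backnode $v_j$, $j<k$, would trigger the special step of the algorithm, producing the positive value $d_j:=2$ and contradicting that no pivot is positive. Therefore the unique zero occurs at the root $v_k$, i.e. $b_k(\lambda-\rho(M_{T_k}(s)))=0$, and by the previous sentence this is its smallest positive root. Thus $\rho(M_{T_k}(s))=\lambda-\varepsilon_k$, giving $\hat\lambda=\lambda-\lim_k \varepsilon_k$ and the stated equivalence. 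Moreover, since $\rho(M_{T_k}(s))$ is strictly increasing by Theorem~\ref{thm: shearer increasing}, the sequence $(\varepsilon_k)$ is strictly decreasing, so it suffices to force $\varepsilon_k\to 0$ along any subsequence.

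For the sufficient condition I would differentiate the pivots. A direct computation gives $b_j'(0)=1+r_j s^2/(\lambda-1)^2+(s^2/b_{j-1}^2)\,b_{j-1}'(0)$, and setting $\beta_j:=b_j'(0)/(-b_j)$ reproduces exactly the recurrence \eqref{eq:beta0}, with $c_j,\gamma_j>0$ because $b_{j-1},b_j<0$. By \eqref{eq:maximal shearer} the pivots satisfy $\theta'(s)-\delta<b_j<\theta'(s)<0$, so $|b_j|$ is bounded above and below by positive constants independent of $k$. Writing $\hat\beta_k:=b_k'(0)/(-b_k)$ for the end pivot (whose constant correction $-s^2$ does not affect $b_k'(0)$), these bounds yield $\hat\beta_k\ge (s^2/(|b_{k-1}||b_k|))\beta_{k-1}\ge C\beta_{k-1}$ for a constant $C>0$. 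Hence if $(\beta_j)$ is unbounded, then $\hat\beta_k\to\infty$ along a subsequence.

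It remains to convert the blow-up of $\hat\beta_k$ into $\varepsilon_k\to 0$. Since $b_k(0)=-|b_k|$ stays bounded while $b_k'(0)=\hat\beta_k|b_k|\to\infty$, the first zero of the increasing function $b_k(\varepsilon)$ is squeezed toward $0$. Making this rigorous is the main obstacle: one must show that on $(0,\varepsilon_k)$ the pivot $b_k(\varepsilon)$ is increasing with derivative at least $b_k'(0)$, so that the tangent-line estimate $\varepsilon_k\le |b_k|/b_k'(0)=1/\hat\beta_k$ holds. I would prove this by induction on $j$, showing that each $b_j(\varepsilon)$ is increasing and convex on $(0,\varepsilon_j)$ and that $\varepsilon_k<\varepsilon_{k-1}<\cdots$, so that the terms $-s^2/b_{j-1}(\varepsilon)$ and $-r_j s^2/(1-\lambda+\varepsilon)$ feeding into $b_k$ remain increasing and convex on the relevant interval; the monotonicity of $\varphi$ on the negative axis (Theorem~\ref{thm: dyn neg neg Delta posit}) propagates these properties through the recurrence. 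Granting $\varepsilon_k\le 1/\hat\beta_k$, unboundedness of $(\beta_j)$ forces $\varepsilon_k\to 0$ along a subsequence, and the monotonicity of $(\varepsilon_k)$ from the first part upgrades this to $\varepsilon_k\to 0$, whence $\hat\lambda=\lambda$.
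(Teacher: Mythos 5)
Your proposal is correct and takes essentially the same route as the paper: identify the smallest positive root of the last pivot with the spectral gap $\lambda-\rho(M_{T_k}(s))$ via Sylvester inertia, derive the recurrence for $\beta_j$ by differentiating the pivots $b_j(\varepsilon)$ at $\varepsilon=0$, and use monotonicity plus convexity of each $b_j(\varepsilon)$ to obtain the tangent-line bound $\varepsilon_k\le 1/\beta_k$, so that unboundedness of $(\beta_j)$ forces $\varepsilon_k\to 0$. If anything, your first part is slightly sharper than the paper's (you prove the exact identity $\varepsilon_k=\lambda-\rho(M_{T_k}(s))$ using simplicity of the spectral radius and the algorithm's special step, where the paper only asserts $\lambda-\varepsilon_k<\rho(M_{T_k}(s))<\lambda$), and the convexity induction you defer as ``the main obstacle'' is exactly the computation the paper carries out explicitly.
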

\begin{proof}  We notice that $Diagonalize(M_{T_{k}}(s), -(\lambda-\varepsilon))$ will have all output for the backnode vertices negative if $\rho(M_{T_{k}}(s))< \lambda$, that is, taking $\varepsilon=0$ by construction. Hence, one can obtain $|\lambda - \rho(M_{T_{k}}(s))| < \varepsilon_{k}$ if, and only if, some $b_k(\varepsilon) $ changes to a positive value in $\varepsilon=\varepsilon_k$, that is,  $\lambda -  \varepsilon_{k} < \rho(M_{T_{k}}(s)) < \lambda$. Thus, $\lim_{k \to \infty}  \rho(M_{T_{k}}(s))= \lambda$ if, and only if,   $\varepsilon_k \to 0$.
Following the reasoning of \cite{AalphaOlivTrev}, we know that $\hat \lambda= \lim_{k \to \infty}  \rho(M_{T_{k}}(s))= \lambda$ if, and only if, for any $\varepsilon>0$ there exists $k:=k_\varepsilon$ such that  $Diagonalize(M_{T_{k}}(s), -(\lambda-\varepsilon))$ has some positive output. Let us examine these numbers.

Fixed the  caterpillar $T_k=[r_1,r_2, \ldots, r_k], \; k \geq 2$, which satisfy $\rho(M_{T_{k}}(s))< \lambda$ by construction of the Shearer's sequence, we obtain
\[
\begin{cases}
	b_1(\varepsilon) :=1 - \lambda + \varepsilon  +r_{1} \delta(\varepsilon) \\
	b_{j}(\varepsilon) := \varphi_{\varepsilon}(b_{j-1}(\varepsilon) )  +r_{j} \delta(\varepsilon),\; 2 \leq j \leq k-1\\
	b_{k}(\varepsilon) := -s^2 + \varphi(b_{k-1}(\varepsilon) )  +r_{k} \delta(\varepsilon) ,
\end{cases}\]
where
\[\delta(\varepsilon):=s^2 \frac{ \lambda- \varepsilon }{\lambda- \varepsilon -1}
\text{ and }
\varphi_{\varepsilon}(t):= 1+s^2 - \lambda + \varepsilon-  \frac{ s^2}{t}, \; t \neq 0.\]

Define $\varepsilon_{0}:=\lambda -1$ then $b_1(\varepsilon)$ has a single vertical asymptote at this point and, for $\varepsilon \in (0,\varepsilon_0)$ the function $b_1(\varepsilon)$ is differentiable and
\[\frac{d\;}{d\varepsilon} b_1(\varepsilon) = 1+ r_1 s^2 \frac{ 1 }{(\lambda- \varepsilon -1)^2} \geq 1.\]
Thus, $b_1(\varepsilon)$ is monotonously increasing in $(0,\varepsilon_0)$ and continuous in $[0,\varepsilon_0)$, with $b_1(0)< \theta(s)<0$.  In particular,  there exists a unique root $\varepsilon_{1} \in (0,\varepsilon_0)$ (see Figure~\ref{fig:criteria_deformed}). Moreover, $b_1(\varepsilon)$ is twice differentiable and convex because
\[\frac{d^2\;}{d\varepsilon^2} b_1(\varepsilon) =  2 r_1 s^2 \frac{ 1 }{(\lambda- \varepsilon -1)^3} >0.\]
Using the linear approximation at $\varepsilon=0$ we obtain a root $\alpha_1= -\frac{b_1(0)}{\frac{d\;}{d\varepsilon} b_1(0)} > \varepsilon_{1}$ or equivalently
\[\beta_{1}:= \frac{1}{\alpha_1}= \frac{\frac{d\;}{d\varepsilon} b_1(0)}{-b_1(0)} = \frac{1+ r_1 s^2 \frac{ 1 }{(\lambda -1)^2}}{ \lambda -1  - r_{1} \delta }.\]

\begin{figure}[H]
	\centering
	\includegraphics[width=0.35\linewidth]{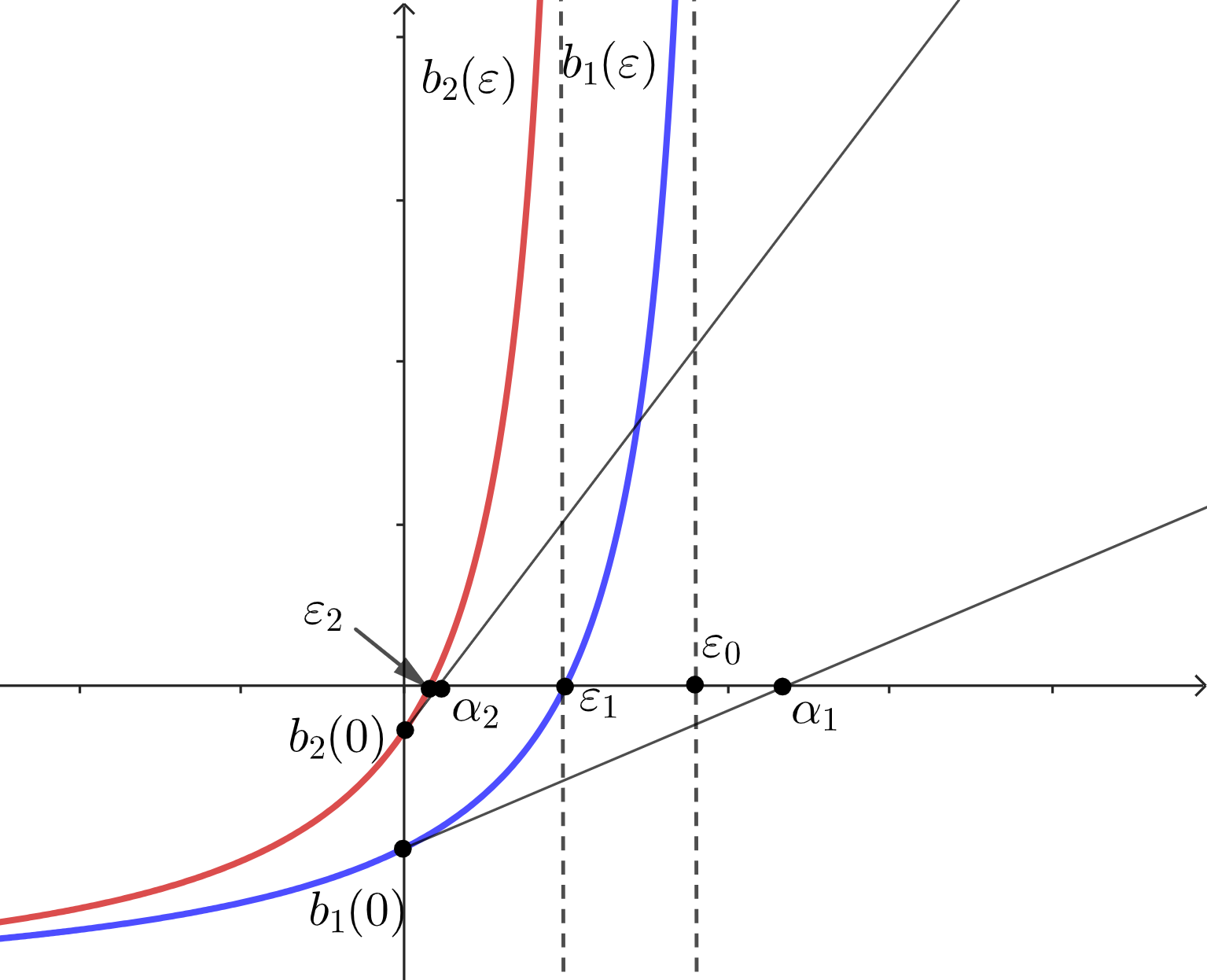}
	\caption{Sequences $\varepsilon_j$ and $\alpha_j$.}
	\label{fig:criteria_deformed}
\end{figure}

We can recursively apply this procedure at every step. $b_{j}(\varepsilon)$ has a single vertical asymptote at the point $\varepsilon_{j-1}$ and, for $\varepsilon \in (0,\varepsilon_{j-1})$ the function $b_{j}(\varepsilon)$ is differentiable and
\[\frac{d\;}{d\varepsilon} b_{j}(\varepsilon) = 1+  \frac{ s^2}{ b_{j-1}(\varepsilon)^2} \frac{d\;}{d\varepsilon} b_{j-1}(\varepsilon)  +  r_{j} s^2 \frac{ 1 }{(\lambda- \varepsilon -1)^2}   \geq 1.\]
Thus, $b_{j}(\varepsilon)$ is monotonously increasing in $(0,\varepsilon_{j-1})$ and continuous in $[0,\varepsilon_{j-1})$, with $b_{j}(0)< \theta(s)<0$.  In particular,  there exists a unique root $\varepsilon_{j} \in (0,\varepsilon_{j-1})$. Moreover, $b_{j}(\varepsilon)$ is twice differentiable and convex because
\[\frac{d^2\;}{d\varepsilon^2} b_{j}(\varepsilon) =\frac{ s^2}{ b_{j-1}(\varepsilon)^2} \frac{d^2\;}{d\varepsilon^2} b_{j-1}(\varepsilon)  -2 \frac{ s^2}{ b_{j-1}(\varepsilon)^3} \left( \frac{d\;}{d\varepsilon} b_{j-1}(\varepsilon)\right) ^2  +  2 r_{j} s^2 \frac{ 1 }{(\lambda- \varepsilon -1)^3} >0.\]

Using the linear approximation at $\varepsilon=0$ we obtain a root $\alpha_{j}= -\frac{b_{j}(0)}{\frac{d\;}{d\varepsilon} b_{j}(0)} > \varepsilon_{j}$ or equivalently
\[\beta_{j}:= \frac{1}{\alpha_j}= \frac{\frac{d\;}{d\varepsilon} b_{j}(0)}{-b_{j}(0)}
= \frac{1+  \frac{ s^2}{ b_{j-1}^2} \frac{d\;}{d\varepsilon} b_{j-1}(0)  +  r_{j} s^2 \frac{ 1 }{(\lambda -1)^2} }{  -b_{j}(0) } = \]
\[= \frac{1+ r_{j} s^2 \frac{ 1 }{(\lambda -1)^2} }{  -b_{j}(0) }  +   \frac{ \frac{ s^2}{ b_{j-1}^2} \frac{d\;}{d\varepsilon} b_{j-1}(0) }{  -b_{j}(0) }=  \frac{1+ r_{j} s^2 \frac{ 1 }{(\lambda-1)^2} }{  -b_{j}(0) }  +\frac{ s^2}{b_{j-1}(0)b_{j}(0)}  \frac{  \frac{d\;}{d\varepsilon} b_{j-1}(0)}{ -b_{j-1}}=\]
\[= \frac{1+ r_{j} s^2 \frac{ 1 }{(\lambda -1)^2} }{  -b_{j}(0) }  +\frac{ s^2}{b_{j-1}(0)b_{j}(0)} \beta_{j-1}.\]

Denoting $c_j:= \frac{1+ r_{j} s^2 \frac{ 1 }{(\lambda -1)^2} }{  -b_{j}(0) }>\frac{1 }{  -b_{j}(0) }> \frac{1 }{  \delta(s) -\theta'(s) } >0,\; j\geq 2$ and $\gamma_j:=\frac{ s^2}{b_{j-1}(0)b_{j}(0)},\; j\geq 2 $ we obtain the recurrence
\begin{equation}\label{eq:beta}
	\begin{cases}
	 \beta_{1}= \frac{1+ r_1 s^2 \frac{ 1 }{(\lambda -1)^2}}{ \lambda -1  - r_{1} \delta }\\
	 \beta_{j} = c_{j} + \gamma_{j}\beta_{j-1}
\end{cases}
\end{equation}
whose solution is
	\begin{align}
\beta_{j} &= c_{j} + \gamma_{j}(c_{j-1} + \gamma_{j-1}(c_{j-2} + \gamma_{j-2}(\cdots (c_{2} + \gamma_{2}\beta_{1}))))\nonumber\\
&=c_{j} +  \gamma_{j} c_{j-1} + \gamma_{j}\gamma_{j-1} c_{j-2} + \cdots +  \gamma_{j}\gamma_{j-1}\cdots \gamma_{2}\beta_{1}.\label{eq:explict beta}
\end{align}

In order to conclude our proof we notice that we already established that $\hat \lambda= \lim_{k \to \infty}  \rho(M_{T_{k}}(s))= \lambda$ if, and only if, $\varepsilon_k \to 0$, and $\varepsilon_k  \leq \alpha_k = \frac{1}{\beta_k}$. Hence, if the positive sequence $(\beta_{j})_{j \geq 1}  $  is unbounded then    $\varepsilon_k \to 0$ and consequently $ \lim_{k \to \infty}  \rho(M_{T_{k}}(s))= \lambda$.

\end{proof}

\subsection{Main result}

We now present our main theorem providing  intervals entirely formed by $s$-deformed Laplacian limit points for  $s \in (0,1)$. We notice that our initial goal was to characterize all $s$-deformed Laplacian limit points for a fixed value of $s$. We do not characterize all limit points because we have studied this problem for ``large'' values of  $\lambda > 1$.  However, our result is stronger in this range. In fact, we show that  any $\lambda >1$ is an $s$-deformed Laplacian limit point not for a single value of $s$ but rather for a whole interval of $s \in (0, s^*(\lambda))$, for a specific number $0 <  s^*(\lambda)<1$ .

\begin{theorem}\label{thm:suf criterion}
	 Let $(T_k)_{k \in \mathbb{N}}$ be the Shearer's sequence for $\lambda>1$. Then, there exists a unique value $0<s^*:=s^*(\lambda) <\sqrt{\lambda} -1$ given by
\[s^*(\lambda):=\left( \frac{\ell}{2}\,-{\frac {4\,{\lambda}^{2}+8\,\lambda-2}{\ell}}+{\lambda+1}\right)
\frac{\lambda-1}{3 \lambda},
 	\]
where
$\ell=\sqrt [3]{12\,\sqrt {3}\sqrt {{\frac{3\,{\lambda}^{3}+ 4\, {\lambda}^{2} +20\,\lambda-4}{\lambda}}}{\lambda}^{2}-28\,{\lambda}^{3}+24\, {\lambda}^{2}-48\,\lambda+8}$, such that for all $s \in (0,s^*)$,  $\lambda$ is an $s$-deformed Laplacian limit point. Moreover, the correspondence $\lambda \to s^*(\lambda)$ is increasing,  $\lim_{\lambda \to 1} s^*(\lambda) = 0$, and $\lim_{\lambda \to \infty} s^*(\lambda) = 1$. Consequently, for any fixed value $\lambda_0>1$ and for any $s \in (0,s^*(\lambda_0))$ the interval $[\lambda_0, \; +\infty)$ is entirely formed by $s$-deformed Laplacian limit points.
\end{theorem}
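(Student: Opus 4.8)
The plan is to invoke the sufficient condition of Theorem~\ref{thm: main criterion}: if the sequence $(\beta_j)_{j\ge 1}$ of \eqref{eq:beta0} is unbounded, then $\hat\lambda=\lambda$, so $\lambda$ is an $s$-deformed Laplacian limit point. Since $\beta_j=c_j+\gamma_j\beta_{j-1}$ with $c_j>\frac{1}{\delta-\theta'(s)}>0$ bounded below and $\gamma_j=\frac{s^2}{b_{j-1}b_j}>0$, the cleanest route to unboundedness is to guarantee $\gamma_j\ge 1$ for every $j$: then $\gamma_j\beta_{j-1}\ge\beta_{j-1}$ gives $\beta_j\ge\beta_{j-1}+c_j$, which telescopes to $\beta_j\ge\beta_1+(j-1)\inf_i c_i\to\infty$. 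Thus the whole matter reduces to controlling the multipliers $\gamma_j$, which is attractive because it sidesteps the floor-dependent values $b_j$ entirely.

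I would bound $\gamma_j$ uniformly using the Shearer window \eqref{eq:maximal shearer}, i.e. $\theta'(s)-\delta<b_j<\theta'(s)<0$. Writing $q:=-\theta'(s)>0$, this gives $|b_j|\in(q,q+\delta)$, hence $\gamma_j=\frac{s^2}{|b_{j-1}|\,|b_j|}>\frac{s^2}{(q+\delta)^2}$, so $\gamma_j>1$ for all $j$ as soon as $s>q+\delta$. I would then study the sign of $F(s):=s+\theta'(s)-\delta(s)=s-q-\delta$ on the adapted range $(0,\sqrt\lambda-1)$. Since $\theta'(0)=\delta(0)=0$ and $F'(0)=1>0$, $F>0$ near $0$; at the endpoint $s=\sqrt\lambda-1$ one has $1+s^2-\lambda=-2s$, so $\theta'=-s$, $q=s$ and $F=-\delta<0$. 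By the intermediate value theorem there is a zero $s^*\in(0,\sqrt\lambda-1)$ with $F>0$ on $(0,s^*)$, which yields $\gamma_j>1$ and hence $\hat\lambda=\lambda$ for every $s\in(0,s^*)$.

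For the explicit formula I would convert $s=q+\delta$ into a polynomial equation via the fixed-point identities of Theorem~\ref{thm: dyn neg neg Delta posit}, namely $\theta\theta'=s^2$ and $\theta+\theta'=1+s^2-\lambda$ (equivalently $pq=s^2$, $p+q=\lambda-1-s^2$ with $p:=-\theta$). Substituting $q=s-\delta$ and $\delta=\frac{s^2\lambda}{\lambda-1}$ and eliminating $p$, the condition collapses (after discarding the spurious root $s=0$ introduced when dividing out $s$) to the cubic
\[
\lambda\,s^{3}-(\lambda^{2}-1)\,s^{2}+(\lambda-1)^{2}(\lambda+2)\,s-(\lambda-1)^{3}=0,
\]
whose Cardano solution is the stated $s^*(\lambda)$; note its depressing shift is $\frac{\lambda^{2}-1}{3\lambda}=(\lambda+1)\frac{\lambda-1}{3\lambda}$, matching the corresponding term in the formula. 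Monotonicity of $\lambda\mapsto s^*(\lambda)$ would follow by implicit differentiation of this relation (or from the closed form), and the limits from dominant balance: setting $s=a(\lambda-1)$ as $\lambda\to1$ forces $s^*\to0$, while as $\lambda\to\infty$ the cubic reduces to $s-1=0$, giving $s^*\to1$. The interval statement is then immediate: fix $\lambda_0>1$ and $s\in(0,s^*(\lambda_0))$; for any $\lambda\ge\lambda_0$, monotonicity gives $s<s^*(\lambda_0)\le s^*(\lambda)<\sqrt\lambda-1$, so $s$ is adapted to $\lambda$ and lies in $(0,s^*(\lambda))$, whence $\lambda\in\mathcal{L}(s)$ by the main part, while the strict growth of $\rho(M_{T_k}(s))$ from Theorem~\ref{thm: shearer increasing} supplies the required distinct values; hence $[\lambda_0,+\infty)\subseteq\mathcal{L}(s)$.

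The conceptual key that makes everything tractable is the observation that requiring $\gamma_j>1$ uniformly, through the window bound \eqref{eq:maximal shearer}, is precisely what drives $(\beta_j)$ to be unbounded, so the floor-laden Shearer data never enter quantitatively. The genuine obstacle is the analysis of $s^*$ itself: proving that $F$ changes sign exactly once on $(0,\sqrt\lambda-1)$ (equivalently, isolating the correct root of the cubic among possibly three) and rigorously establishing monotonicity of $\lambda\mapsto s^*(\lambda)$. Both require either a careful convexity/implicit-function argument for $F$ or direct manipulation of the Cardano expression, and this is where I expect the real technical work to lie.
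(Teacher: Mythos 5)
Your proposal is correct and follows essentially the same route as the paper: the same reduction to unboundedness of $(\beta_j)$ via Theorem~\ref{thm: main criterion}, the same threshold function $F(s,\lambda)=\theta'(s)-\delta+s$ extracted from the window \eqref{eq:maximal shearer}, the same cubic (the paper's quartic with the spurious root $s=0$ removed) solved by Cardano, and the same monotonicity-plus-limits argument for the interval $[\lambda_0,+\infty)$. The only difference is cosmetic: you conclude unboundedness by telescoping $\beta_j>\beta_{j-1}+c_j$ from $\gamma_j>1$, whereas the paper expands the recurrence explicitly and bounds products of $\gamma$'s below by pairing consecutive $b_j$'s — your version is slightly leaner, and both hinge on the identical bounds $c_j>\frac{1}{\delta-\theta'(s)}$ and $|b_j|<\delta-\theta'(s)<s$.
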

\begin{proof} We will split the proof in four parts.

 Our first claim characterizes the size of $b_j$ using a scalar function.\\
    \textbf{Claim 1:}  If the function $F(s, \lambda):= \theta'(s) -\delta+s$ defined for $\lambda>1, 0\leq s\leq \sqrt{\lambda} -1$, is positive then $\frac{ s^2}{b_{j-1} ^2 }> 1$, for all $j$.
    \vspace{0.5cm}\\
     \begin{proof}
 Indeed,  as $b_{j}<0$ (Shearer's sequence property), the inequality $\frac{ s^2}{b_{j-1} ^2 }> 1$  is equivalent to $b_{j}>-s$. On the other hand from Equation~\eqref{eq:maximal shearer}, $b_j>  \theta'(s) -\delta$, so it is enough to show that  $\theta'(s) -\delta>-s$ or equivalently, that the function $F(s, \lambda)= \theta'(s) -\delta+s$ is positive. Thus, if $F(s, \lambda):= \theta'(s) -\delta+s$ is positive then $\frac{ s^2}{b_{j-1} ^2 }> 1$, for all $j$.
    \end{proof}

    Inspired by Claim 1, we now aim to find conditions under which  $F(s, \lambda_0)>0$ holds for a fixed $\lambda_0>1$.\\
     \textbf{Claim 2:}  For any fixed $\lambda_0>1$, there exists a unique value $0<s^*=s^*(\lambda_0) <\sqrt{\lambda_0} -1$ such that  $F(s, \lambda_0)>0$ for each $0<s<s^*$.  In particular, the correspondence  $\lambda_0 \to s^*(\lambda_0)$ is increasing $\lim_{\lambda_0 \to 1} s^*(\lambda_0) = 0$ and $\lim_{\lambda_0 \to \infty} s^*(\lambda_0) = 1$.

    \begin{proof}
    	 	The inequality $F(s, \lambda)>0$ is equivalent to  the expression
    	 \[F(s, \lambda):=\left(\frac{(1+s^2-\lambda)+\sqrt{(1+s^2-\lambda)^2-4s^2}}{2}   \right) -\frac{ s^2 \lambda}{\lambda -1}  +s >0.\]
    	 We notice that, for each fixed $\lambda_0$, as $s$ is adapted to $\lambda_0$ the valid domain of $s$ in  $F(s, \lambda_0)$ is $ 0\leq s <\sqrt{\lambda_0} -1$. It is easy to see that the inequality can only hold if $s>0$ since  $F(0, \lambda_0)=0$
    	 \[\left(\frac{(1+0^2-\lambda_0)+\sqrt{(1+0^2-\lambda_0)^2 - 4 \;0^2}}{2}   \right) -\frac{ 0^2 \lambda_0}{\lambda_0 -1}  +0 =0, \; \forall \lambda_0>1.\]
    	 After the root $s=0$, the function $s \mapsto F(s,\lambda_0)$ increases reaching a maximum.  	 After that, it decreases until $\sqrt{\lambda_0} -1$, where it is negative, leaving a single root, which we denote $s^*=s^*(\lambda_0)$.  See Figure~\ref{fig:s-star-zero} for a typical illustration of  $F(s,7)$ for $\lambda_0=7$.
    	 \begin{figure}[H]
	\centering
	\includegraphics[width=0.3\linewidth]{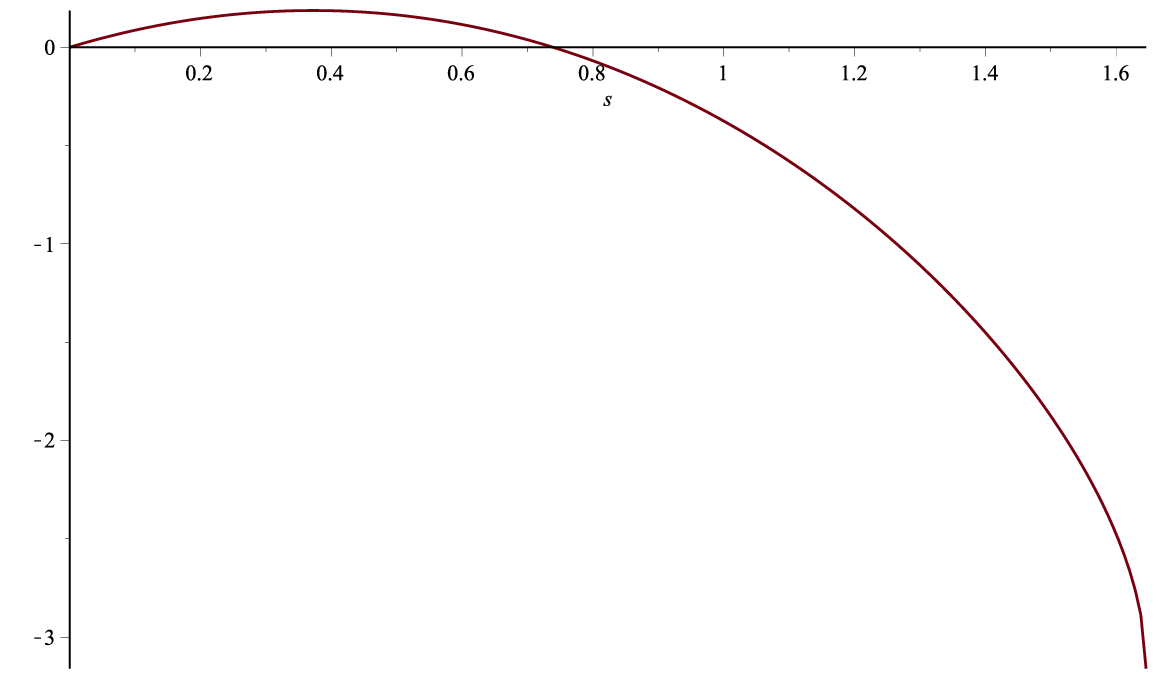}
	\caption{Graph of $F(s,7)$ where $s^*(7)\approx 0.7388$.}
	\label{fig:s-star-zero}
\end{figure}
 Knowing the existence and uniqueness of $s^*$ we can find a representation for it, by simplifying the equation $F(s^*, \lambda_0)=0$  and to solve it with respect to $s^*$. An extensive algebraic manipulation shows that   the pair $(s^*,\lambda_0)$ is the only positive solution of the following degree four polynomial equation
    	 {\small \[ -4\,\lambda\,{s}^{4}+ \left( 4\,{\lambda}^{2}-4 \right) {s}^{3}+
    	 	\left( -4\,{\lambda}^{3}+12\,\lambda-8 \right) {s}^{2}+ \left( 4\,{
    	 		\lambda}^{3}-12\,{\lambda}^{2}+12\,\lambda-4 \right) s=0.
    	 \]}
    	
    	 Using Cardano's formula we obtain four solutions: two complex numbers, zero (which is not suitable, as we already discuss) and  a positive real number  $s^*$ given by
    	 \[s^*(\lambda)=\left( \frac{\ell}{2}\,-{\frac {4\,{\lambda}^{2}+8\,\lambda-2}{\ell}}+{\lambda+1}\right)
    	 \frac{\lambda-1}{3 \lambda},
    	 \]
    	 where
    	 $\ell=\sqrt [3]{12\,\sqrt {3}\sqrt {{\frac{3\,{\lambda}^{3}+ 4\, {\lambda}^{2} +20\,\lambda-4}{\lambda}}}{\lambda}^{2}-28\,{\lambda}^{3}+24\, {\lambda}^{2}-48\,\lambda+8}$.
    	  \begin{figure}[H]
    	  	\centering
    	  	\includegraphics[width=0.4\linewidth]{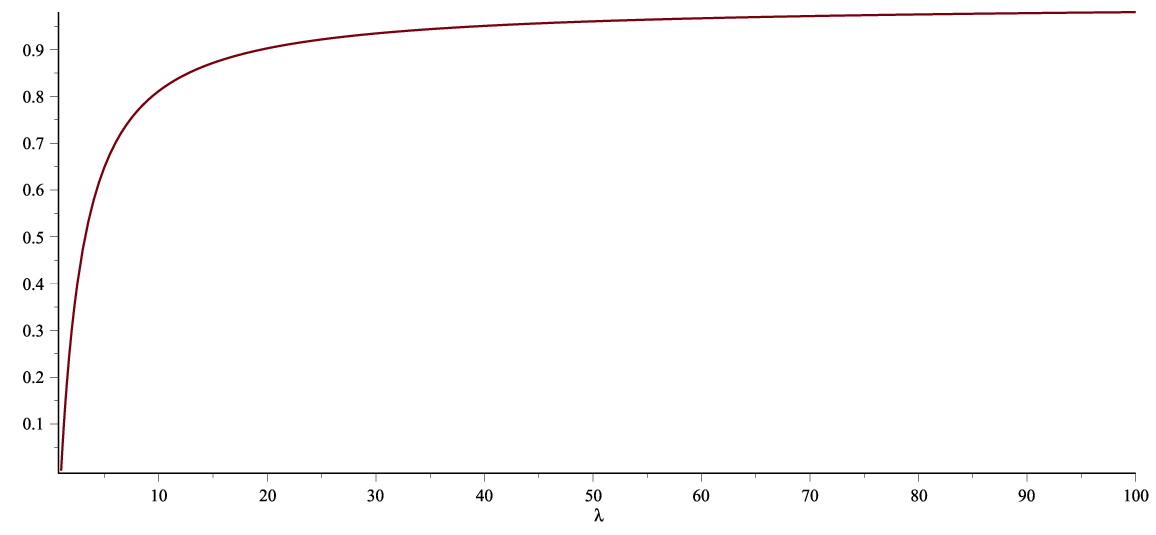}
    	  	\caption{Graph of $s^*(\lambda)$.}
    	  	\label{fig:s-star}
    	  \end{figure}	
    	  Hence, we can define the correspondence $\lambda \to s^*(\lambda)$ (see Figure~\ref{fig:s-star}).	The correspondence $\lambda \to s^*(\lambda)$ is increasing, $\lim_{\lambda \to 1} s^*(\lambda) = 0$ and $\lim_{\lambda \to \infty} s^*(\lambda) = 1$.
    	
    	  We can also inspect the behavior of $F(s, \lambda)$ in the whole domain, see Figure~\ref{fig:s-star-zero3d}
    	  \begin{figure}[H]
    	  	\centering
    	  	\includegraphics[width=0.3\linewidth]{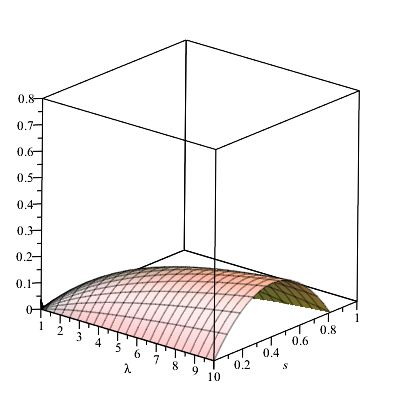}
    	  	\caption{Graph of $F(s,\lambda)$.}
    	  	\label{fig:s-star-zero3d}
    	  \end{figure}
    	  As $\lambda$ approaches $1$ we can see that the interval $(0, s^*)$ becomes narrow and vanishes at $\lambda=1$.
    \end{proof}

    We now show that the inequality obtained in Claim 1 implies the unboundness of the sequence $(\beta_j)_{j \in \mathbb{N}}$, given by Equation~\eqref{eq:beta}.\\
    \textbf{Claim 3:}  If $\frac{ s^2}{b_{j-1} ^2 }> 1$, for all $j$ then  the sequence $(\beta_j)_{j \in \mathbb{N}}$ is unbounded.
    \vspace{0.5cm}\\
    \begin{proof}    	
    	 We know that $(\beta_j)_{j \in \mathbb{N}}$ satisfy a variable coefficient first order  difference Equation~\eqref{eq:beta}, whose solution is given by Equation~\eqref{eq:explict beta}:
    	  \[\beta_{j} = c_{j} +  \gamma_{j} c_{j-1} + \gamma_{j}\gamma_{j-1} c_{j-2} + \cdots +  \gamma_{j}\gamma_{j-1}\cdots \gamma_{2}\beta_{1}.\]    	
    	  In order to evaluate $\beta_{j}$  we notice that
    	  $c_j:= \frac{1+ r_{j} s^2 \frac{ 1 }{(\lambda -1)^2} }{  -b_{j} }>\frac{1 }{  -b_{j}}> \frac{1 }{  \delta(s) -\theta'(s) } >0,\; j\geq 2$, and $\frac{1 }{  -b_{j}}> \frac{1 }{  \delta(s) -\theta'(s) } >0,\; j\geq 2$   implies that
    	  \begin{equation}\label{eq: lowr_bound}
    	  	 \frac{1}{b_{j} b_{j-m}}=  \left( \frac{1 }{  -b_{j}}\right) \left(  \frac{1 }{  -b_{j-m}}\right)  > \frac{1 }{  (\delta(s) -\theta'(s))^2 } >0.
    	  \end{equation}  	     	
    	
    	  Then, because $\gamma_j:=\frac{ s^2}{b_{j-1}b_{j}},\; j\geq 2 $, we can evaluate each product as follows
    $$\gamma_{j} \cdots \gamma_{j-m} = \frac{s^2}{b_{j} b_{j-m}} \; \frac{ s^2}{b_{j-1} ^2 } \cdots \frac{ s^2}{b_{j-m+1} ^2 }.$$
 From the hypothesis,  $\frac{ s^2}{b_{j-1} ^2 }> 1$ for all $j$, and from Equation~\eqref{eq: lowr_bound}, we derive  following inequality
\begin{equation}\label{eq:conv_cond}
\gamma_{j} \cdots \gamma_{j-m} = \frac{s^2}{b_{j} b_{j-m}} \; \frac{ s^2}{b_{j-1} ^2 } \cdots \frac{ s^2}{b_{j-m+1} ^2 }> \frac{s^2}{b_{j} b_{j-m}} \; >\frac{s^2 }{  (\delta(s) -\theta'(s))^2 },~~~m \geq 0.
\end{equation}
    	 Using Equations~\eqref{eq: lowr_bound} and \eqref{eq:conv_cond} we obtain
    	 \[\beta_{j} = c_{j} +  \gamma_{j} c_{j-1} + \gamma_{j}\gamma_{j-1} c_{j-2} + \cdots +  \gamma_{j}\gamma_{j-1}\cdots \gamma_{2}\beta_{1}>\]
    	 \[ > \frac{1 }{  \delta(s) -\theta'(s) } + (j-2) \frac{s^2}{  (\delta(s) -\theta'(s))^3}  + \frac{s^2 }{  (\delta(s) -\theta'(s))^2 }\beta_{1}.\]
    	  Hence,  that $\beta_j$ is unbounded because it tends to infinity when $j \to \infty$.
    \end{proof}

    \textbf{Claim 4:}   For any fixed $\lambda_0>1$ and  $0<s<s^*(\lambda_0)$ one has $\hat \lambda= \lim_{k \to \infty}  \rho(M_{T_{k}}(s))= \lambda$ for all $\lambda \geq \lambda_0$.
    \vspace{0.5cm}\\
    \begin{proof}   	
    	  From Claim 2, we know that for any fixed $\lambda_0>1$, there exists a unique value $0<s^*:=s^*(\lambda_0) <\sqrt{\lambda_0} -1$ such that  $F(s, \lambda_0)>0$ for each $0<s<s^*$.   From Claim 1,  if the function $F(s, \lambda_0)$ is positive then $\frac{ s^2}{b_{j-1} ^2 }> 1$, for all $j$. Finally, from Claim 3, if $\frac{ s^2}{b_{j-1} ^2 }> 1$, for all $j$ then  the sequence $(\beta_j)_{j \in \mathbb{N}}$, given by Equation~\eqref{eq:beta}, is unbounded. From the second part of Theorem~\ref{thm: main criterion} we know that,   if  the sequence $(\beta_j)_{j \in \mathbb{N}}$  is unbounded, then $\hat \lambda_0= \lim_{k \to \infty}  \rho(M_{T_{k}}(s))= \lambda_0$. In other words, $\lambda_0$ is an $s$-deformed Laplacian limit point.
    	
    	  Consider any $\lambda>\lambda_0$. Since  the correspondence  $\lambda \to s^*(\lambda)$ is increasing we have $s^*(\lambda) > s^*(\lambda_0)$, thus by the same above reasoning $\lambda$ is also an $s$-deformed Laplacian limit point, for the same $s$ as $\lambda_0$.     	
    	
    \end{proof}

Therefore, the proof of the theorem follows.

\end{proof}

\section{Remarks and numerical data}\label{sec: Remarks and numerical data}

\subsection{An approximating algorithm }
As an application, we can use $\operatorname{Diagonalize}(M_{T}(s), -\lambda)$ to compute an approximation of $\rho(M_{T}(s))$. This is useful because if $T=[r_1, r_2, \ldots,r_k]$ then $|T|=k+ \sum_{i=1}^{k}r_i$ reaches very large numbers in some cases (for instance, if $s\approx 0$).  This makes the traditional matrix computations non-feasible.

\begin{figure}[H]
	{\tt	{\footnotesize
		\begin{tabbing}
			aaa\=aaa\=aaa\=aaa\=aaa\=aaa\=aaa\=aaa\= \kill
		 \> Input: $T=[r_1, r_2, \ldots, r_k]$ a caterpillar\\
			\> Input: $A<B$ to be lower and upper bounds for $ \rho(M_{T}(s))$\\
			\> Input: $N \geq 2$ be the number of iterations \\
			\> Input: $s$ a real number\\
			\> Output: an approximation $\rho$ of $ \rho(M_{T}(s))$ \\
			\>  Algorithm Approximate $ \rho(M_{T}(s))$ \\
			\> \> {\bf for } for j from 1 to N do:\\
			\> \> \> {\bf  initialize the computation of } $\operatorname{Diag}(M_{T}(s), - \frac{A+B}{2})$\\
			\> \> \> {\bf if some output is non-negative then}  \\
			\> \> \>  \>  {\bf break the computation of} $\operatorname{Diag}(M_{T}(s), - \frac{A+B}{2})$\\
			\> \> \>  \> $A \leftarrow \frac{A+B}{2}$\\
			\> \> \>  \> $B  \leftarrow B$\\
			\> \> \>  \> $\rho:= \frac{A+B}{2}$\\
			\> \> \> {\bf else, if  if all the outputs of} $\operatorname{Diag}(M_{T}(s), - \frac{A+B}{2})$  {\bf are negative then}  \\
			\> \> \>  \> $A \leftarrow A$\\
			\> \> \>  \> $B  \leftarrow \frac{A+B}{2}$\\
			\> \> \>  \> $\rho:= \frac{A+B}{2}$\\
			\> \>  {\bf end loop}
		\end{tabbing}}}
	\caption{Computing the spectral radius of $T=[r_1, r_2, \ldots, r_k]$.}\label{fig:compute algorithm}
\end{figure}
The algorithm always work and is linearly fast with respect to the number of vertices of the backbone used in $\operatorname{Diagonalize}(M_{T}(s), - \frac{A+B}{2})$ (could be even faster in case of an early break). It performs successive bisections of the interval $[A,B]$ to seek the solution, thus $|\rho - \rho(M_{T}(s))| < \frac{A-B}{2^N}$. The bounds $A$ and $B$ are trivial for trees and could be easily obtained from the minimum and maximum degrees at $T$.  In the case of $T=T_{k}$ be in the Shearer's sequence we can choose $A=1$ and $B=\lambda$ because, by construction   $1< \rho(M_{T_{k}}(s)) < \lambda$. Also, if we  choose  $s \in (0,s^*(\lambda))$ (the natural choice for the Shearer's sequence), then we know that the result will indeed approximate $\lambda$.

In the examples, the speed of convergence (to $\lambda$) is defined by the growing rate of $\beta_j$ (recall that $\lambda-  \rho(M_{T_{k}}(s))=\varepsilon_k < \alpha_k=1/\beta_k$) which we know to be proportional to a sum of products  of the quantities $\frac{s^2}{b_j^2}>1$.

In particular, values close to $0$ and $s^*$ tend to be the least efficient  because $\frac{s^2}{b_j^2} \approx 1$ (see Figure~\ref{fig:s-star-zero}). Thus, a good choice to  speed improvement would be $s= \frac{s^*}{2}$. The best choice would be the local maximum of $F(\cdot,\lambda)$, but it depends on an additional (and not necessary) numerical approximation.

\bigskip

\subsection{Numerical data}
In all cases we employ the Algorithm Approximate $ \rho(M_{T}(s))$ from Figure~\ref{fig:compute algorithm}.

\begin{example} Consider $\lambda=1.5$ then $s^*=0.17869088+$ and choose $s= \frac{s^*}{2}= 0.08934544+$
	 \begin{center}
	 	\begin{tabular}{ccll}
	 		$k$ &	$T_{k}(\lambda)$  &	 $ \rho(M_{T_{k}}(s))$ & Error$=\lambda-  \rho(M_{T_{k}}(s)) $ \\
	 		\hline \\
	 		$5$ & $[20, 4, 0, 2, 9]$ & $1.499999827168959+$ & $1.72831041 \times 10^{-7}$ \\
	 		$10$ & $[20, 4, 0, 2, 9, 4, 1, 7, 11, 8]$ & $1.499999999999235+$ & $7.65 \times 10^{-13}$ \\
	 		$20$ &  $[20, 4, 0, \ldots,  6, 9, 6]$ & $1.499999999999999+$ & $2.68 \times 10^{-23}$ \\
	 		$30$ &  $[20,4,0, \ldots, 9, 10, 6]$ & $1.499999999999999+$ & $2.33 \times 10^{-34}$ \\
	 		$50$ &  $[20,4,0, \ldots, 3, 9, 3]$ & $1.499999999999999+$ & $7.26 \times 10^{-55}$ \\
	 	\end{tabular}
	 \end{center}
Now, choose $s= s^*$ to compare the speed of convergence
	\begin{center}
		\begin{tabular}{ccll}
			$k$ &	$T_{k}(\lambda)$  &	 $ \rho(M_{T_{k}}(s))$ & Error$=\lambda-  \rho(M_{T_{k}}(s)) $ \\
			\hline \\
			$5$ & $[4, 1, 0, 1, 2]$                                            & $1.49854070+$  & $1.459  \times 10^{-3}$ \\
			$10$ & $[4, 1, 0, 1, 1, 1, 2, 0, 1, 1]$                    &  $1.49986673+$ & $1.332 \times 10^{-4}$ \\
			$20$ &  $[4, 1, 0, 1, 1, 1, 2, 0, \ldots, 0, 1, 2]$  &  $1.49999959+$ & $4.035  \times 10^{-7}$  \\
			$50$ &  $[4, 1, 0, 1, 1, 1, 2, 0, \ldots, 0, 1, 2]$  &  $1.49999999+$ & $7.013  \times 10^{-17}$ \\
			$80$ &   $[4, 1, 0, 1, 1, 1, 2, 0, \ldots, 0, 1, 1]$ & $1.49999999+$  & $1.704  \times 10^{-26}$
		\end{tabular}
	\end{center}
	 We can see that  the number of pendant paths at the caterpillars are more sparse leading to a slower convergence.
\end{example}

\begin{example} Consider now $\lambda=5.4$ then $s^*=0.6718978+$ and choose $s= \frac{s^*}{2}= 0.3359489+$ to a fast convergence
	\begin{center}
		\begin{tabular}{ccll}
			$k$ &	$T_{k}(\lambda)$  &	 $ \rho(M_{T_{k}}(s))$ & Error$=\lambda-  \rho(M_{T_{k}}(s)) $ \\
			\hline \\
			$5$ & $[31, 23, 9, 17, 23]$                          & $5.39999978119+$ & $2.18  \times 10^{-7}$ \\
			$10$ & $[31, 23, 9,  \ldots, 12, 20, 22]$   & $5.39999999999+$ & $5.05  \times 10^{-14}$  \\
			$20$ &  $[31, 23, 9,  \ldots, 25, 19, 16]$  & $5.39999999999+$ & $4.10  \times 10^{-24}$  \\
			$50$ &  $[31,23,9, \ldots, 24, 24, 24]$     & $5.39999999999+$ & $2.18  \times 10^{-57}$ \\
			$80$ &   $[31,23,9, \ldots, 4, 22, 21]$      & $5.39999999999+$ & $2.43  \times 10^{-75}$
		\end{tabular}
	\end{center}
	We can see that the number of pendant paths at the caterpillars are more dense leading to a fast convergence.
	
	From \cite{lineartrees-24} we know that for the Laplacian matrix ($s=1$), the Shearer sequence does not approximate $\lambda=5.4$. We can see that by choosing $s$ close to 1.
		\begin{center}
		\begin{tabular}{ccll}
			$s$ & 	$T_{150}(\lambda)$  &	 $ \rho(M_{T_{150}}(s))$ & Error$=\lambda-  \rho(M_{T_{150}}(s)) $ \\
			\hline \\
			$0.9$ & $[4,1,2,  \ldots, 2,2,2]$          & $5.3999999999+$ & $4.99  \times 10^{-42}$ \\
			$0.99$ & $[3,1,1,  \ldots,1,2,1]$          & $5.3999999999+$ & $1.04  \times 10^{-29}$  \\
			$0.999$ &  $[3,1,1,  \ldots,1,1,2]$  & $5.3656282604+$ & $3.43  \times 10^{-2}$  \\
			$0.9999$ &  $[3,1,1,  \ldots,1,1,2]$  & $5.3716157858+$ & $2.83  \times 10^{-2}$
		\end{tabular}
	\end{center}
It is possible to prove that the best approximation will be $5.37+$ (see \cite[Lemma 3.5]{lineartrees-24}).
\end{example}
\begin{example} In this final example we unleash the full power of the combined algorithms we developed. Consider $\lambda=2025$ (because we are in 2025 and the convergence for big numbers is very fast). Computing $s^*:=0.9990125897+$ the Theorem~\ref{thm:suf criterion} says that for any $s \in (0, s^*)$ the Shearer sequence satisfy $\lim_{k \to \infty}  \rho(M_{T_{k}}(s))= \lambda$. As discussed before we took $s= \frac{s^*}{2}= 0.499506294892+$ to a fast convergence and $k=150$. The whole process of numerically approximate $s^*$,  generate the caterpillar
	\[T_{150}=  [8108, 7431, 8095, 8086, 8102, 8093, \ldots, 8095, 8090, 8102, 8102, 8092],\]
	approximate the spectral radius of the caterpillar $ \rho(M_{T_{150}}(0.499506294892+))$, via  Algorithm Approximate,  as being
	\[\rho(M_{T_{150}}(0.499506294892+))= 2024.9999999999999999999999\ldots999956+\]
	and the absolute error
	\[2025 -  \rho(M_{T_{150}}(0.499506294892+))= 4.332248  \times 10^{-193},\]
	took approximately $1.7$ seconds, using a naive implementation (a non-optimized code) in a personal computer. Notice that $|M_{T_{150}}(0.499506294892+)|= 1211693$ thus a traditional approach would be required to deal with a matrix with more than a million as dimension, a hart task, despite the fact that such a matrix being highly sparse.
\end{example}

\subsection{Remarks and open questions}
We still do not know what happens when $s^*(\lambda_0) $ approaches 1 (the classical Laplacian matrix). Our main result, Theorem~\ref{thm:suf criterion}, claims that the interval $[\lambda_0, \; +\infty)$ is entirely formed by $s$-deformed Laplacian limit points  for $s<s^*(\lambda_0)$, but in this situation $\lambda_0$ becomes arbitrarily big.

However, one could still have $\hat \lambda= \lim_{k \to \infty}  \rho(M_{T_{k}}(s))= \lambda$  for $s> s^*(\lambda_0)$. In this case, our proof for Theorem~\ref{thm:suf criterion}  shows that, for some terms $\frac{s^2}{b_j^2} <1$ so the terms $\beta_j$ could still be unbounded due some compensation of  big and small values of $\frac{s^2}{b_j^2} $ in Equation~\ref{eq:conv_cond}.

If we were able to extend  $ s^*(\lambda_0) \to 1$ without $\lambda_0 \to \infty$ we would be able to find an interval of Laplacian limit points ($s=1$) by taking the limit with respect to $s$. This question remains open.

\section*{Acknowledgements.}
Elismar Oliveira and Vilmar Trevisan are partially supported by  CNPq grant 408180/2023-4. Vilmar Trevisan also acknowledges the support of CNPq grant 308774/2025-6.

\end{document}